\theoremstyle{plain}
\newtheorem{thm}{Theorem}[section]
\newtheorem{prop}[thm]{Proposition}
\newtheorem{rem}[thm]{Remark}
\newtheorem{ques}[thm]{Question}
\newtheorem{conj}[thm]{Conjecture}
\def\cal{\mathcal}
\def\bbb{\mathbb}
\def\op{\operatorname}
\renewcommand{\phi}{\varphi}
\newcommand{\R}{\bbb{R}}
\newcommand{\N}{\bbb{N}}
\newcommand{\Z}{\bbb{Z}}
\newcommand{\Q}{\bbb{Q}}
\newcommand{\C}{\bbb{C}}
\begin{document}

\title[Points at rational distances]{Points at rational distances from the vertices of certain geometric objects}
\author{Andrew Bremner, Maciej Ulas}

\keywords{rational points, elliptic surfaces, rational distances set}
\subjclass[2010]{11D25, 14J20, 11G35, 14G05}
\thanks{The research of the second author is supported by the grant of the Polish National Science Centre no. UMO-2012/07/E/ST1/00185}

\begin{abstract}
We consider various problems related to finding points in $\Q^{2}$ and in $\Q^{3}$
which lie at rational distance from the vertices of some specified geometric object, for
example, a square or rectangle in $\Q^{2}$, and a cube or tetrahedron in $\Q^{3}$.
\end{abstract}

\maketitle

\section{Introduction}\label{sec0}
Berry~\cite{Be} showed that the set of rational points in the plane with rational distances to
three given vertices of the unit square is infinite. More precisely, he showed that the set of rational
parametric solutions of the corresponding system of equations is infinite; this generalizes some earlier
work of Leech. In a related work, he was able to show that for any given triangle $ABC$ in which the length
of at least one side is rational and the squares of the lengths of all sides are rational, then the set of
points $P$ with rational distances $|PA|$, $|PB|$, $|PC|$ to the vertices of the triangle is dense in the
plane of the triangle; see Berry~\cite{Be1}. However, it is a notorious and unsolved problem to determine
whether there exists a rational point in the plane at rational distance from the {\it four} corners of the unit
square (see Problem D19 in Guy's book~\cite{Guy}). Because of the difficulty of this problem one can ask a slightly
different question, as to whether there exist rational points in the plane which lie at rational distance
from the four vertices of the {\it rectangle} with vertices $(0,0)$, $(0,1)$, $(a,0)$, and $(a,1)$, for $a \in \Q$. This problem is briefly alluded to in section D19 on p. 284 of Guy's book. In section
\ref{sec2} we reduce this problem to the investigation of the existence of rational points on members of
a certain family of algebraic curves $\cal{C}_{a,t}$ (depending on rational parameters $a$, $t$).
We show that the set of $a \in \Q$ for which the set of rational points on $\cal{C}_{a,t}$ is infinite
is dense in $\R$ (in the Euclidean topology).
%
%

Richard Guy has pointed out that there are immediate solutions to the four-distance unit square problem
if the point is allowed to lie in three space $\Q^3$. Indeed, $(\frac{1}{2}, \frac{1}{2}, \frac{1}{4})$
lies at rational distance to the four vertices $(0,0,0)$, $(0,1,0)$, $(1,0,0)$, $(1,1,0)$ of the square.
This observation leads us to consider the more general problem, of points in $\Q^3$ which lie at rational
distance from the four vertices $(0,0,0)$, $(0,1,0)$, $(1,0,0)$, $(1,1,0)$ of the unit square.  In section
\ref{sec3} we show that such points are dense on the line $x=\frac{1}{2}$, $y=\frac{1}{2}$,
and dense on the plane $x=\frac{1}{2}$. Further, there are infinitely many parameterizations
of such points on the plane $x=y$.
In section \ref{sec4}
we consider the general problem of finding points $(x,y,z) \in \Q^3$ with rational distances to the
vertices of a unit square lying in the plane $z=0$ without any assumptions on $x,y,z$. Attempts to show
such points are dense in $\R^3$ have been unsuccessful to date. However, we are able to show that the
variety related to this problem is unirational over $\Q$. In particular, this implies the existence of
a parametric family of rational points with rational distances to the four
vertices $(0,0,0)$, $(0,1,0)$, $(1,0,0)$, $(1,1,0)$ of the unit square. Whether there exist points in $\Q^3$
at rational distance from the eight vertices of the unit {\it cube} is another seemingly intractable
problem which we leave as open and certainly worthy of further investigation.

In section \ref{sec5} we consider the problem of finding points in $\Q^3$ at rational distance from the
vertices of a general tetrahedron (with rational vertices) and prove that the corresponding algebraic
variety is unirational over $\Q$. This is related to section D22 in Guy's book. This result, together with
the construction of a parameterized family of tetrahedra having rational edges, face areas, and volume (an
independent investigation), leads to constructing a double infinity of sets of five points in $\Q^3$
with the ten distances between them all rational.

Finally, in the last section we collect some numerical results and prove that under a certain symmetry
assumption it is possible to find a parametric family of points in $\Q^3$ with rational distances
to the six vertices of the unit cube. Without symmetry, we found just one point with five of the distances
rational.

\section{Points in $\Q^2$ with rational distances from the vertices of rectangles}\label{sec2}

Let $a \in \Q$. Consider the rectangle $\cal{R}_{a}$ in the plane with vertices at $P_1=(0,0)$, $P_2=(0,1)$, $P_3=(a,0)$,
and $P_4=(a,1)$.

\begin{thm}
\label{thm2-1}
The set of $a\in\Q$ such that there are infinitely many rational points with rational distance to each
of the corners $P_1,...,P_4$ of $\cal{R}_{a}$ is dense in $\R$.
\end{thm}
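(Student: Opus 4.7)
The plan is to reduce the four-rationality condition to a family of plane curves $\mathcal{C}_{a,t}$, show these are elliptic for generic parameters, produce a non-torsion section over $\Q(a)$, and then invoke Silverman's specialization theorem to get infinitely many rational points for all but finitely many $a\in\Q$. Since $\Q$ is dense in $\R$, this will establish the theorem.

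Set $P=(x,y)$ and let $d_{i}$ denote the distance from $P$ to $P_{i}$. The identities
\[
d_{1}^{2}-d_{2}^{2}=2y-1,\qquad d_{3}^{2}-d_{4}^{2}=2y-1
\]
exhibit the pairs $(d_{1},d_{2})$ and $(d_{3},d_{4})$ as rational points on conics with obvious base points, so introducing parameters $s,t\in\Q^{*}$ we may write
\[
d_{1}=\tfrac{1}{2}\bigl(s+(2y-1)/s\bigr),\qquad d_{2}=\tfrac{1}{2}\bigl(s-(2y-1)/s\bigr),
\]
and analogously $d_{3},d_{4}$ in terms of $t$. The equation $d_{1}^{2}=x^{2}+y^{2}$ then expresses $x^{2}$ as a rational function of $(s,y)$, while $d_{1}^{2}-d_{3}^{2}=2ax-a^{2}$ expresses $x$ linearly as a rational function of $(s,t,y,a)$. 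Equating these two expressions for $x$ and $x^{2}$ and clearing denominators yields, for each fixed pair $(a,t)\in\Q^{2}$, a single polynomial relation in $(s,y)$ cutting out the curve $\mathcal{C}_{a,t}$.

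Next I would verify that for generic $t$ the curve $\mathcal{C}_{a,t}$ has genus one over $\Q(a)$ and carries a natural rational point, for example the one coming from the axis of symmetry $x=a/2$ of $\mathcal{R}_{a}$, where $d_{1}=d_{3}$ and $d_{2}=d_{4}$ reduce the problem to an explicitly solvable two-distance rationality condition. A standard chord-tangent transformation then furnishes a Weierstrass model for the corresponding elliptic surface $\mathcal{E}_{t}\to\mathbb{A}^{1}_{a}$ together with a distinguished section $Q=Q(a)$. The principal obstacle is showing that $Q$ is of infinite order in $\mathcal{E}_{t}(\Q(a))$. The cleanest way to handle this is by specializing $a$ to a convenient rational $a_{0}$ for which the Mordell--Weil torsion of $\mathcal{E}_{a_{0},t}(\Q)$ is computable (say by reduction modulo two small primes of good reduction combined with Mazur's classification), and then checking that the specialized point $Q(a_{0})$ is not annihilated by any integer dividing the order of that torsion subgroup. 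If the symmetric section turns out to be torsion, the same analysis may be applied to a second, asymmetric, base point obtained by a secant construction from the symmetric one.

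Once $Q$ is established as non-torsion over $\Q(a)$, Silverman's specialization theorem implies that $Q(a)$ remains of infinite order on $\mathcal{C}_{a,t}(\Q)$ for all but finitely many $a\in\Q$. Each such $a$ thereby yields infinitely many rational points at rational distance from all four corners of $\mathcal{R}_{a}$. Since only finitely many rationals are excluded, the set of good $a$ is cofinite in $\Q$, and in particular dense in $\R$, as claimed.
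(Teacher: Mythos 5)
Your reduction to a family of curves $\mathcal{C}_{a,t}$ is sound and runs parallel to the paper's, but the argument collapses at the step where you assert that these curves are elliptic for generic parameters and then plan to build an elliptic surface over $\mathbb{A}^1_a$ with a non-torsion section. They are not elliptic: after eliminating $x,y$ and parameterizing the quadric $P^2-Q^2=R^2-S^2$ by $t$ (the analogue of your substitutions for $d_1,\dots,d_4$ in terms of $s,t,y$), the resulting plane quartic has genus $3$ over $\Q(a,t)$ --- the paper computes this explicitly --- so for generic $(a,t)$ Faltings' theorem gives only \emph{finitely} many rational points on each fiber, and there is no Weierstrass model, no section $Q(a)$, and no way to invoke Silverman. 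Your proposed symmetric base point is also problematic: on the axis $x=a/2$ the two-distance condition forces, e.g., $a^2/4+(1-y)^2=\square$, which for $y=1/2$ requires $a^2+1=\square$ and so does not exist for general $a$. Consequently your conclusion --- a \emph{cofinite} set of good $a\in\Q$ --- is far stronger than anything the argument can deliver, and indeed stronger than what the paper proves; the authors' own numerical search (Table 1) suggests such a statement is out of reach.

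The missing idea is that one must locate the special parameter locus on which $\mathcal{C}_{a,t}$ degenerates to genus $\le 1$. The paper computes a Gr\"obner basis of the ideal generated by the defining quartic $F$ and its partial derivatives, finds the factor $(1+2at-t^2)(-1+2at+t^2)$, and concludes that singular fibers occur only when $a=\pm(1-t^2)/(2t)$; on that locus $F$ becomes the square of a conic with a rational point, hence is rationally parameterizable, giving infinitely many solutions for every such $a$. The theorem then follows because $\{(1-t^2)/(2t):t\in\Q\}$ is dense in $\R$. This is exactly why the statement claims only density of the good set of $a$, not cofiniteness: the genus-$3$ obstruction confines the construction to a thin (though dense) family of rectangles.
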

\begin{proof}
Let $M=(x,y)$ be a rational point with rational distance to each vertex $P_1,...,P_4$ of $\cal{R}$.
This determines the following system of equations:
\begin{equation}\label{rectanglesys1}
\begin{cases}
\begin{array}{lll}
  x^2+y^2 & = & P^2=|MP_{1}|^2,  \\
  x^2+(1-y)^2 & = & Q^2=|MP_{2}|^2, \\
  (a-x)^2+y^2 & = & R^2=|MP_{3}|^2, \\
  (a-x)^2+(1-y)^2 & = & S^2=|MP_{4}|^2.
\end{array}
\end{cases}
\end{equation}
From the first and third equations, and the first and second equations, we deduce respectively
\begin{equation}\label{xy}
x=\frac{1}{2a}(a^2+P^2-R^2),\quad y=\frac{1}{2}(P^2-Q^2+1).
\end{equation}
Eliminating $x,y$ from the system (\ref{rectanglesys1}) we obtain
\begin{equation}\label{rectanglesys2}
\begin{cases}
\begin{array}{lll}
 P^2-Q^2=R^2-S^2,  \\
  a^2 (R^4+a^2+1) + Q^4 + (1+a^2) S^4 = 2 Q^2 (a^2+S^2) + 2 a^2 R^2 (S^2+1).
\end{array}
\end{cases}
\end{equation}
The first quadric may be parameterized by
\begin{equation}
\label{RS}
R=\frac{(P+Q)t^2+P-Q}{2t},\quad S=\frac{(P+Q)t^2-P+Q}{2t}.
\end{equation}
On homogenizing, by setting $P=X/Z$, $Q=Y/Z$, the second equation at (\ref{rectanglesys2}) becomes:
\begin{align*}
&(1 - 4 t^2 + 6 t^4 + 16 a^2 t^4 - 4 t^6 + t^8) (X^4 + Y^4) +
 4(t^2 - 1)^3(t^2 + 1) (X^2 + Y^2) X Y-\\
& 8 a^2 t^2 (1 + t^2)^2(X^2 + Y^2) Z^2 +
 16 a^2 t^2 Z^2 ((1 - t^4) X Y + (1 + a^2) t^2 Z^2) +\\
&\quad 2 (3 - 4 t^2 + 2 t^4 - 16 a^2 t^4 - 4 t^6 + 3 t^8) X^2 Y^2=0.
\end{align*}
This equation defines a curve $\cal{C}_{a,t}$ of genus three over the field $\Q(a,t)$. It is well known that a curve of genus
at least $2$ defined over a function field has only finitely many points with coordinates in this field. Thus, in order
to prove the theorem we must find some specialization $a_{0}$, $t_{0}$ of the rational parameters $a$, $t$, such that
the corresponding curve $\cal{C}_{a_{0},t_{0}}$, has genus at most $1$. In particular, the curve $\cal{C}_{a,t}$ needs
to have singular points. Denote the defining polynomial of $\cal{C}_{a,t}$ by $F=F(X,Y,Z)$. Now
$\cal{C}_{a,t}$ has singular points when the system of equations
\begin{equation}\label{singsol}
F(X,Y,Z)=\partial_{X}F(X,Y,Z)=\partial_{Y}F(X,Y,Z)=\partial_{Z}F(X,Y,Z)=0
\end{equation}
has rational solutions. In order to find solutions of this system, consider the ideal
\begin{equation*}
\op{Sing}=<F,\partial_{X}F,\partial_{Y}F,\partial_{Z}F>
\end{equation*}
and compute its Gr\"{o}bner basis.
The basis contains the polynomial $-a^2(1+a^2)t^6(1 + 2 a t - t^2) (-1 + 2 a t + t^2) Z^7$, and to obtain
something non-trivial, we require $a=\pm (1-t^2)/2t$.  We choose without loss of generality $a=(1-t^2)/2t$ (the other sign
corresponds to solutions in which $x$ is replaced by $-x$).
Now, $F=G^2$, where
\begin{equation*}
G(X,Y,Z)=(t^2-1)((t^2+1)X^2+2(t^2-1)X Y+(t^2+1)Y^2 -(t^2+1)Z^2)
\end{equation*}
and by abuse of notation we are working with the curve $\cal{C}_{a,t}:\;G(X,Y,Z)=0$ of degree 2 defined over
the rational function field $\Q(t)$. The genus of $\cal{C}_{a,t}$ is 0, and moreover, there is a $\Q(t)$-rational
point $(0,1,1)$ lying on $\cal{C}_{a,t}$. This point allows the parametrization of $\cal{C}_{a,t}$ in the following form:
\begin{equation*}
X=2u((1-t^2)u+(t^2+1)v), \; Y=(t^2+1)(u^2-v^2), \; Z=(t^2+1)(u^2+v^2)-2(t^2-1)u v.
\end{equation*}
Recalling that $P=X/Z, Q=Y/Z$ and using the expressions for $R,S$ at (\ref{RS}), $x,y$ at (\ref{xy}), we get that
for $a=(1-t^2)/2t$ there is the following parametric solution of the system (\ref{rectanglesys1}):
\begin{align*}
x=&\frac{4 t u(v^2-u^2)\left((t^2-1)u-(t^2+1)v\right)}{\left((t^2+1)u^2-2(t^2-1)u v+(1+t^2)v^2\right)^2},\\
y=&\frac{2 u\left((t-1)u-(t+1)v\right)\left((t+1)u-(t-1)v\right)\left((t^2-1)u - (t^2+1)v\right)}{\left((t^2+1)u^2-2(t^2-1)u v+(1+t^2)v^2\right)^2}.
\end{align*}
To finish the proof, note that the rational map $a:\R\ni t\mapsto \frac{1-t^2}{2t}\in\R$ is continuous
and has the obvious property
 \begin{equation*}
\lim_{t\rightarrow -\infty}a(t)=+\infty,\quad\quad \lim_{t\rightarrow +\infty}a(t)=-\infty.
\end{equation*}
The density of $\Q$ in $\R$ together with the properties of $a(t)$ immediately imply that the set $a(\Q)\cap \R_{+}$ is dense in $\R_{+}$ in the Euclidean topology. The theorem follows.
\end{proof}

\begin{rem}
{\rm Observe that the construction presented in the proof of Theorem \ref{thm2-1} allows deduction of the following simple result..

\begin{thm}\label{sqrt{2}}
Let $K$ be a number field and suppose that $\sqrt{2}\in K$. Then the set of $K$-rational points with $K$-rational distances to the vertices of the square $\cal{R}_{1}$ is infinite.
\end{thm}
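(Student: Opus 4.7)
The plan is to apply the parametric construction in the proof of Theorem~\ref{thm2-1} directly, observing that the rectangle $\mathcal{R}_1$ corresponds to the specialization $a=1$ and that the relation $a=(1-t^2)/(2t)$ becomes solvable over $K$ precisely when $\sqrt{2}\in K$.

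First, I would set $a=1$ in the equation $a=(1-t^2)/(2t)$ derived in the proof of Theorem~\ref{thm2-1}. This gives the quadratic $t^2+2t-1=0$, whose roots are $t=-1\pm\sqrt{2}$. Since $\sqrt{2}\in K$ by hypothesis, either root provides a $K$-rational value of $t$, say $t_0=-1+\sqrt{2}$.

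Second, I would substitute this $t_0$ into the explicit parametrization
\begin{align*}
x&=\frac{4tu(v^2-u^2)\bigl((t^2-1)u-(t^2+1)v\bigr)}{\bigl((t^2+1)u^2-2(t^2-1)uv+(t^2+1)v^2\bigr)^2},\\
y&=\frac{2u\bigl((t-1)u-(t+1)v\bigr)\bigl((t+1)u-(t-1)v\bigr)\bigl((t^2-1)u-(t^2+1)v\bigr)}{\bigl((t^2+1)u^2-2(t^2-1)uv+(t^2+1)v^2\bigr)^2},
\end{align*}
and similarly into the formulas giving $P,Q,R,S$. For any $(u,v)\in K^2$ with non-vanishing denominator, the resulting point $(x,y)\in K^2$ lies at $K$-rational distances $P,Q,R,S$ from the four vertices of $\mathcal{R}_1$, by construction of the family in Theorem~\ref{thm2-1}.

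The remaining issue is to show that this parametrization produces infinitely many distinct points. Since the underlying conic $\mathcal{C}_{1,t_0}\colon G(X,Y,Z)=0$ is a genus-zero curve birationally parametrized by $(u:v)\in\mathbb{P}^1(K)$ (and one even recovers infinitely many distinct points by restricting $(u:v)\in\mathbb{P}^1(\mathbb{Q})$), distinct generic values of $(u:v)$ yield distinct $(P:Q:Z)$, hence distinct $(x,y)$. This is the one point requiring care, but it is routine: the map $(u:v)\mapsto(x,y)$ is a non-constant rational map from $\mathbb{P}^1$ to $\mathbb{A}^2$, so its fibres are finite and infinitely many choices of $(u:v)$ produce infinitely many points. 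This yields the desired infinite set of $K$-rational points at $K$-rational distances from the vertices of $\mathcal{R}_1$.
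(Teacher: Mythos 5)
Your proposal is correct and follows essentially the same route as the paper: specialize the relation $a=\pm(1-t^2)/(2t)$ at $a=1$, which forces $t=\pm1\pm\sqrt{2}\in K$, and then feed that $t$ into the explicit $(u,v)$-parametrization from Theorem~\ref{thm2-1} (the paper uses $t=1+\sqrt{2}$, you use $t=-1+\sqrt{2}$, which differ only by the sign choice already discussed there). Your added remark that the map $(u:v)\mapsto(x,y)$ is a non-constant rational map with finite fibres, hence yields infinitely many distinct points, is a small but welcome extra justification that the paper leaves implicit.
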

\begin{proof}
Let $a=1$ and take $t=1+\sqrt{2}$. Then $1+2at-t^2=0$ and using the parametrization constructed at the end of Theorem \ref{thm2-1} (with $v=1$) we get that for
\begin{align*}
x=&\frac{u(u-\sqrt{2})(1-u^2)}{(u^2-\sqrt{2}u+1)^2},\\
y=&\frac{(3-2\sqrt{2})u(\sqrt{2}(u-1)-2)(\sqrt{2}(u-1)+\sqrt{2})((1+\sqrt{2}) u-\sqrt{2}-2)}{2(u^2-\sqrt{2}u+1)^2}
\end{align*}
and any given $u\in K$ such that $\sqrt{2}u^2-2u+\sqrt{2}\neq 0$, the distance of the point $P=(x,y)$ to the vertices $P_{1}, P_{2}, P_{3}, P_{4}$ of $\cal{R}_{1}$ is $K$-rational.
\end{proof}

}
\end{rem}

\begin{rem}
{\rm The construction of $a$'s and the corresponding solutions $x,y$ of the system (\ref{rectanglesys1}) presented in the proof of Theorem \ref{thm2-1} has one aesthetic disadvantage. In order that $(x,y)$ lie {\it inside} the rectangle $\cal{R}$, it is necessary that
$x$, $a-x$, $y$, $1-y$, all be positive. However,
\begin{align*}
x & (a-x)y(1-y) = -4 u^2(u^2-v^2)^2 \left( ((1-t)u+(1+t)v)((1+t)u+(1-t)v) \right)^2 \times \\
& \left( \frac{((1-t^2)u+(1+t^2)v)((1+2t-t^2)u+(1+t^2)v)((1-2t-t^2)u+(1+t^2)v)}{((1+t^2)u^2+2(1-t^2)u v+(1+t^2)v^2)^4} \right)^2
\end{align*}
which is evidently negative. Thus the point $(x,y)$ can never lie within the rectangle $\cal{R}$.
A natural question arises therefore as to whether it is possible to find a positive
rational number $a$ such that the system (\ref{rectanglesys1}) has rational solutions $x, y$
with $x$, $a-x$, $y$, $1-y$, all positive?
The answer is yes, on account of the family
$$
a=\frac{2t}{t^2-1},\quad x=\frac{t}{t^2-1},\quad y=\frac{1}{2}
$$
where $x, a-x, y, 1-y$ are all positive when $t>1$; however, this family is rather uninteresting,
in that correspondingly $P=Q=R=S$. An equivalent question was posed by Dodge in \cite{Dod} with an answer
given by Shute and Yocom. They proved that if $p_{i}, q_{i}, r_{i}$ are Pythagorean triples for
$i=1,2$, and $A=p_{1}q_{2}+p_{2}q_{1}, B=p_{1}p_{2}+q_{1}q_{2}$, then the point
$M=(p_{1}q_{2}, q_{1}q_{2})$ lies inside the rectangle with vertices $(0,0)$, $(A,0)$, $(0,B)$,
$(A,B)$, and, moreover, the distances of $M$ to the vertices of the rectangle are rational.
Using their result one can prove that the set of those $a\in\Q$, such that there are infinitely
many rational points inside the rectangle $\cal{R}_{a}$ with rational distance to its vertices,
is dense in $\R_{+}$. Indeed, note that the point
$$
P=\left(\frac{p_{1}q_{2}}{B},\frac{q_{1}q_{2}}{B}\right)
$$
lies inside the rectangle $\cal{R}_{a}$, with $a=A/B$. To finish the proof, it is enough to show
that one can find infinitely many Pythagorean triples $p_{i}, q_{i}, r_{i}, i=1,2$, such that
$a=A/B$ is constant. Put
\begin{equation*}
\begin{array}{lll}
  p_{1}=1-U^2, & q_{1}=2U, & r_{1}=1+U^2, \\
  p_{2}=1-V^2, & q_{2}=2V, & r_{2}=1+V^2
\end{array}
\end{equation*}
and then
\begin{equation*}
A(U,V)=2(U + V) (1-UV),\quad B(U,V)=(1+U-(1-U)V) (1 - U+ (1+U)V).
\end{equation*}
Since the rectangles $\cal{R}_{a}$  and $\cal{R}_{1/a}$ are equivalent under rotation by ninety degrees
and scaling, we consider only the case $0<a<1$. Set $a=a(t)=\frac{2t}{1-t^2}$, with $0<t<\sqrt{2}-1$
(the transformation between $\cal{R}_{a}$ and $\cal{R}_{1/a}$ is now given by $t \leftrightarrow \frac{1-t}{1+t}$).
Define $C_t$ to be the curve $A(U,V)=a(t)B(U,V)$:
\[ C_t: \; (U+V)(1-U V)(1-t^2) - t (1+U-(1-U)V)(1-U+(1+U)V) = 0. \]
The triple $(t,U,V)$ corresponds to a point $P$ with rational distances to the vertices
of $\cal{R}_{a}$ (with $a=a(t)$) precisely when
\begin{equation}\label{tUVconditions}
0<\frac{p_{1}q_{2}}{B}<\frac{A}{B},\quad 0<\frac{q_{1}q_{2}}{B}<1,
\end{equation}
that is, when
\begin{equation}
\label{tUVconditions1}
\frac{V(1-U^2)}{\Delta}>0, \quad \frac{U(1-V^2)}{\Delta}>0, \quad \frac{U V}{\Delta}>0, \quad \frac{(1-U^2)(1-V^2)}{\Delta}>0,
\end{equation}
where
\[ \Delta=(1+U-(1-U)V)(1-U+(1+U)V)=(1-U^2)(1-V^2)+4U V. \]
Our strategy is to show that the curve $C_t$ contains infinitely many rational points in the
unit square $0<U<1$, $0<V<1$, when the inequalities (\ref{tUVconditions1}) clearly hold, so that the
inequalities (\ref{tUVconditions}) will follow. \\

The equation for $C_t$ defines the hyperelliptic quartic curve:
$$
\cal{C}_{t}: \; W^2=((t^2-1)U^2-4t U+ 1-t^2)^2+4(t U^2-(t^2-1)U-t)^2,
$$
where $W=2(t-U)(1+t U)V + ((t-1)U-t-1)((t+1)U+t-1)$. Now $\cal{C}_{t}$ contains the point $R=(0,\;t^2+1)$,
and a cubic model $\cal{E}_{t}$ for $\cal{C}_{t}$ is given by
\begin{equation*}
\cal{E}_{t}: \; Y^2=X(X+(t^2-2 t-1)^2)(X+(t^2+2 t-1)^2).
\end{equation*}
The curve $\cal{E}_{t}$ contains the point $H(X,Y)=(-(1+t^2)^2, \; 4t(1-t^4))$, and it is readily checked
that $H$ is of infinite order in $\cal{E}_{t}(\Q(t))$.
We now apply theorems of Silverman~\cite[p. 368]{Sil} and of Hurwitz~\cite{Hur}
(see also Skolem~\cite[p. 78]{Sko}).
Silverman's theorem states that if $E_{t}$ is an elliptic curve
defined over $\Q(t)$ with positive rank, then for all but finitely
many $t_{0}\in\Q$, the curve $E_{t_{0}}$ obtained from the
curve $E_{t}$ by the specialization $t=t_{0}$ has positive rank.
From this result it follows that for all but finitely many $t_0\in\Q$ the
elliptic curve $\cal{E}_{t_0}$ is of positive rank. (Indeed, a straightforward
computation shows that the specialization of $H$ at $t=t_0$ is of infinite order
in $\cal{E}_{t_0}(\Q)$ for all $t_0 \in \Q$ with $t_0 \neq 0, \pm 1$, that is, for
all $t_0$ giving a nonsingular specialization).

The theorem of Hurwitz states that if an elliptic curve $E$ defined
over $\Q$ has positive rank and one torsion point of order two (defined over the field $\R$) then the
set $E(\Q)$ is dense in $E(\R)$. The same result holds if $E$ has
three torsion points (defined over the field $\R$) of order two under the assumption that there is
a rational point of infinite order on the bounded branch of the set $E(\R)$.  Here, for $0<t<1$,
the point $H$ satisfies this latter condition, since for $0<t<1$ we have
\[ -(-1-2t+t^2)^2 < -(1+t^2)^2 < -(-1+2t+t^2)^2. \]

Applying the Hurwitz theorem
we get that for all but finitely many $t_0\in\Q$ the set $\cal{E}_{t_0}(\Q)$ is dense
in the set $\cal{E}_{t_0}(\R)$. This proves that the set $\cal{E}_{t_0}(\Q)$ is dense in
the set $\cal{E}_{t_0}(\R)$ in the Euclidean topology. As a consequence we get that the set
$\cal{C}_{t_0}(\Q)$ is dense in the set $\cal{C}_{t_0}(\R)$.
This immediately implies that the image of the map
\begin{equation*}
\cal{C}_{t_0}(\Q)\ni (U,W)\mapsto U\in\R
\end{equation*}
is dense in $\R$ for all but finitely many $t_0\in\Q$ (which is a consequence of the positivity
of the polynomial defining the quartic $\cal{C}_{t}$).

In order to finish the proof therefore we need to show that for given rational $t\in (0,\sqrt{2}-1)$
we can find infinitely many rational points $(U,V)\in \cal{C}_{t}(\Q)$ satisfying
$0<U<1$ and $0<V<1$.
Now
\[ V=V(U)=(W(U) -((t-1) U-(t+1))((t+1)U+t-1))/(2(t-U)(1+t U)), \]
and we consider the connected component of the curve that passes through the point $(U,V)=(0,t)$,
certainly a continuous function on the interval $0<U<t$.  Using
\[ \frac{dV}{dU} =  \frac{-1+t^2-2t U+4t V+2U V-2t^2U V+V^2-t^2V^2+2t U V^2}{1-t^2-4t U-U^2+t^2U^2+2t V-2U V+2t^2U V-2t U^2V} \]
we compute that $\frac{dV}{dU}(0,t)= - \frac{(-1-2t+t^2)(-1+2t+t^2)}{(1+t^2)} < 0$.
Taking $\frac{dV}{dU}$ in the form
\[ \frac{dV}{dU} = -\frac{(1+U^2)(t-V)V(1+t V)}{(1+V^2)(t-U)U(1+t U)}, \]
then the derivative can vanish for $0<U<t$ only when $V=-1/t$ (forcing $U=0$), $0$ (with $U=-1/t,t$),
$t$ (with $U=0$).  Accordingly $\frac{dV}{dU}$ has constant sign (negative) for $0<U<t$,
so that $V(U)$ is a decreasing function on the interval $0 \leq U < t$. Accordingly,
$0 \leq U < t$ implies $0 < V \leq t$ on this component of the curve. Thus the curve $C_t$
contains infinitely many rational points in the square $0<U<t$, $0<V<t$. The situation
is graphed in Figure 1.
\begin{figure}
\includegraphics[width=6.0cm, height=6.0cm, angle=-90]{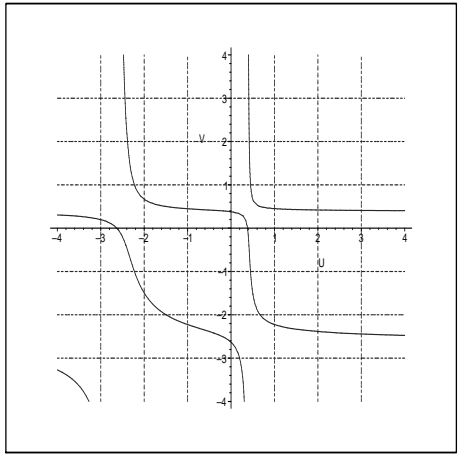}
\caption{The graph of $C_t$ when $0<t<\sqrt{2}-1$}
\end{figure}

Summing up, for all but finitely many $t\in (0,\sqrt{2}-1)$ we can find infinitely many rational
points satisfying the conditions (\ref{tUVconditions1}) and the equation $A(U,V)=a(t)B(U,V)$.
This implies that for all but finitely rational numbers $t\in (0,\sqrt{2}-1)$, the corresponding
point $P$ lies inside the rectangle $\cal{R}_{a(t)}$. Because of the continuity of the function
$a=a(t)$, we get that the set $a(\Q \cap (0,\sqrt{2}-1))$ with $a(t)$ having the required property,
is dense in the set $\R_{+} \cap (0,1)$.

The earlier remark about the equivalence of the rectangles $\cal{R}_{1/a}$ and $\cal{R}_a$
under rotation and scaling now gives the following theorem.

\begin{thm}
\label{thm2-2}
The set of $a\in\Q$ such that there are infinitely many rational points lying inside the
rectangle $\cal{R}_{a}$ with rational distance to each of the corners $P_1,...,P_4$ of $\cal{R}_{a}$
is dense in $\R_{+}$.
\end{thm}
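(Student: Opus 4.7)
The plan is to formalize the argument sketched in the preceding remark, whose ingredients we may take as given. Starting from the Shute--Yocom observation, any two Pythagorean triples $(p_i, q_i, r_i)$, $i=1,2$, yield a point $(p_1 q_2, q_1 q_2)$ inside the rectangle with vertices $(0,0)$, $(A,0)$, $(0,B)$, $(A,B)$, where $A = p_1 q_2 + p_2 q_1$ and $B = p_1 p_2 + q_1 q_2$, with all four distances rational. Rescaling, the point $(p_1 q_2/B,\, q_1 q_2/B)$ lies inside $\cal{R}_a$ with $a = A/B$ and retains the rational-distance property. Using the standard rational parametrization $p_i = 1-U_i^2$, $q_i = 2U_i$ with $U_1 = U$, $U_2 = V$, the theorem reduces to producing, for a dense set of $a \in \R_+$, infinitely many rational $(U,V)$ satisfying $A(U,V) = a\,B(U,V)$ and lying in the region where the positivity conditions (\ref{tUVconditions1}) all hold.

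Next, I would parametrize $a = a(t) = 2t/(1-t^2)$ with $0 < t < \sqrt{2}-1$, so that $0 < a < 1$, reducing to this case by the equivalence $\cal{R}_a \leftrightarrow \cal{R}_{1/a}$. For fixed $t$, the locus $C_t: A(U,V) = a(t)B(U,V)$ is linear in $V$ after squaring and leads to the hyperelliptic quartic $\cal{C}_t$ given in the remark. A rational point such as $R = (0, t^2+1)$ on $\cal{C}_t$ yields a Weierstrass model $\cal{E}_t : Y^2 = X(X + (t^2-2t-1)^2)(X + (t^2+2t-1)^2)$ containing $H = (-(1+t^2)^2,\, 4t(1-t^4))$. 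The nontrivial input here is to verify that $H$ has infinite order in $\cal{E}_t(\Q(t))$; I would do this via a height argument or by specializing at one favourable $t_0$ and checking via Nagell--Lutz that the specialized point is non-torsion.

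Given $H$ of infinite order, Silverman's specialization theorem produces positive rank for $\cal{E}_{t_0}(\Q)$ for all but finitely many $t_0 \in \Q$. The three two-torsion points are real for every $t_0 \in \R \setminus \{0, \pm 1\}$, so Hurwitz's density theorem is applicable; the hypothesis of a rational point of infinite order on the bounded branch is supplied by $H$, as the inequalities $-(t^2-2t-1)^2 < -(1+t^2)^2 < -(t^2+2t-1)^2$ hold for $0 < t < 1$. Consequently $\cal{E}_{t_0}(\Q)$ is dense in $\cal{E}_{t_0}(\R)$, and by the birational map this density transfers to $\cal{C}_{t_0}(\Q)$ inside $\cal{C}_{t_0}(\R)$.

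The main remaining obstacle is to ensure that the rational points produced actually satisfy (\ref{tUVconditions1}), i.e. lie inside the open unit square in $(U,V)$. For this I would isolate the arc of $C_t$ passing through $(U,V)=(0,t)$ and analyse the derivative
\[ \frac{dV}{dU} = -\frac{(1+U^2)(t-V)V(1+tV)}{(1+V^2)(t-U)U(1+tU)}, \]
showing that its only zeros/poles on $0 \le U \le t$ occur at the endpoints, so $V(U)$ strictly decreases from $V(0)=t$ to $V(t)=0$ and the arc lies in $(0,t)\times(0,t)$, a subregion of the interior set. Density of $\cal{C}_{t_0}(\Q)$ in $\cal{C}_{t_0}(\R)$ then produces infinitely many rational $(U,V)$ on this arc, and each such point gives a rational interior point of $\cal{R}_{a(t_0)}$ with rational distances to all four vertices. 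Continuity of $a(t)$ on $(0,\sqrt{2}-1)$ together with density of $\Q$ gives density of admissible $a$ in $(0,1)$, and the rectangle equivalence extends this to density in all of $\R_+$.
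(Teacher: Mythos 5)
Your proposal is correct and follows essentially the same route as the paper's own argument: the Shute--Yocom construction, the parametrization $a=2t/(1-t^2)$, passage to the hyperelliptic quartic and its cubic model with the infinite-order point $H$, the Silverman--Hurwitz density argument, and the monotonicity analysis of the arc through $(U,V)=(0,t)$ to secure the positivity conditions. The only cosmetic slip is calling $C_t$ ``linear in $V$ after squaring'' (it is quadratic in $V$, and the quartic arises from requiring its discriminant to be a square), which does not affect the argument.
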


\begin{rem}
{\rm It is quite interesting that all $a$'s we have found above are of the form $(1-t^2)/2t$ or
$2t/(1-t^2)$. A question arises as to whether we can find $a$'s which are not of this form and
such that there is a rational point with rational distances to the vertices of the rectangle
$\cal{R}_{a}$.
A small numerical search for other such triples $(a,x,y) \in \Q^3$ was undertaken. We wrote
$(x,y)=(X/Z,Y/Z)$, $X,Y,Z>0$, and restricted the search to height of $a$ at most 20, and
$X+Y+Z \leq 1000$. The involutions $(a,x,y) \leftrightarrow (1/a,y/a,x/a)$,
$(a,x,y) \leftrightarrow (a,x-a,y)$, and $(a,x,y) \leftrightarrow (a,x,1-y)$ mean that we can
restrict attention to solutions satisfying $a>1$, $x \leq a/2$, $y \leq 1/2$. Of the solutions
found in the range, fourteen have $x=0$;  seventeen have $x=a/2$; and forty-five have $y=1/2$.
These all imply some equalities between $P,Q,R,S$, and we list only those solutions found where
$P,Q,R,S$ are distinct.

\begin{center}
\begin{tabular}{c|cc||c|cc||c|cc}
 $a$ & $x$ & $y$ & $a$ & $x$ & $y$ & $a$ & $x$ & $y$ \\ \hline
13/12    & 88/399  & 55/133 & 12/11 & 24/77 & 32/77 & 19/12 & 35/204 & 7/17 \\
9/8      & 120/169 & 50/169 & 17/15 & 15/14 & 11/56 & 13/6 & 273/500 & 34/125 \\
9/8  & 15/56 & 5/14  &  &  &  &  &  & \\ \hline
\end{tabular}
\begin{center}
Table 1: points in $\Q^2$ at rational distance to vertices of $\mathcal{R}_a$
\end{center}
\end{center}

}
\end{rem}

\bigskip


}
\end{rem}

\section{Special points in $\Q^3$ at rational distance to the vertices of the unit square}\label{sec3}
We normalize coordinates so that the unit square lies in the plane $z=0$, with vertices
$A=\{(0,0,0),(0,1,0),(1,0,0),(1,1,0)\}$.
\begin{prop}\label{prop1}
Let $\lambda$ be the line in $\R^{3}$ given by $\lambda: x=y=\frac{1}{2}$. Then the set
\begin{equation*}
\Lambda=\{P\in \lambda(\Q):\;\mbox{the distance } |PQ| \mbox{ is rational for all } Q\in A\}
\end{equation*}
is dense in $\lambda(\R)$.
\end{prop}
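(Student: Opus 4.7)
The plan is to exploit the four-fold symmetry of the unit square about its centre. Since the line $\lambda$ is the perpendicular to the plane $z=0$ through the centroid $(1/2,1/2,0)$ of the square, any point $P=(1/2,1/2,z)\in\lambda$ is automatically equidistant from all four vertices in $A$, with common squared distance $z^2+\tfrac{1}{2}$. The four distance conditions therefore collapse into a single arithmetic condition: the set $\Lambda$ is in bijection with $\{z\in\Q:z^2+\tfrac{1}{2}\in\Q^{*2}\}$, and it is enough to show this subset of $\Q$ is dense in $\R$.

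Writing $r^2=z^2+\tfrac{1}{2}$ and substituting $u=2z$, $v=2r$ turns this into the affine conic
$$
\cal{D}:\ v^2-u^2=2.
$$
The first step I would take is to spot the rational point $(u,v)=(1/2,3/2)$ (coming from $z=1/4$, $r=3/4$, and indeed $\tfrac{1}{16}+\tfrac{1}{2}=\tfrac{9}{16}$). This step, although elementary, is the only one that requires a small insight, since $\cal{D}$ has no integer points and one must allow half-integers. Once a rational base point is available, pulling back lines of slope $t$ through it gives a standard birational parameterization of $\cal{D}$, and a short computation yields an explicit formula of the shape
$$
z(t)=\frac{t^2-6t+1}{4(t^2-1)}.
$$

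To finish I would establish density by a continuity argument. The rational function $z(t)$ is continuous on $\R\setminus\{\pm1\}$, so by density of $\Q$ in $\R$ it suffices to check that the image $z(\R\setminus\{\pm1\})$ is all of $\R$. A brief study of $z'(t)$, whose numerator is $6t^2-4t+6$ (a positive-definite quadratic), shows $z$ is strictly monotone on each of the three intervals $(-\infty,-1)$, $(-1,1)$, $(1,\infty)$; combining the three images, determined by the limits at $\pm\infty$ and at $t=\pm1^{\pm}$, one sees that the image is all of $\R$, completing the proof.

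I do not anticipate any serious difficulty: the argument reduces a four-distance problem to a single conic via symmetry, produces a dense set of rational solutions from one visible rational point, and closes by continuity. The only place a reader might pause is at the initial guess of the rational point $(1/2,3/2)$ on $\cal{D}$.
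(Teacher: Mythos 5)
Your proof is correct and follows essentially the same route as the paper: collapse the four distance conditions (by symmetry about the centre of the square) to a single conic in $(z,T)$, parameterize it from one rational point, and conclude by continuity of the resulting rational function $z(t)$ together with density of $\Q$ in $\R$. One point worth flagging: your conic $v^2-u^2=2$, i.e.\ $T^2-z^2=\tfrac12$, is the correct one, since the squared distance from $(\tfrac12,\tfrac12,z)$ to a vertex is $\tfrac14+\tfrac14+z^2$; the paper's displayed condition $\tfrac14+z^2=T^2$ (with base point $(z,T)=(0,\tfrac12)$ and the parameterization $z=\tfrac{1-u^2}{4u}$, $T=\tfrac{1+u^2}{4u}$) drops one of the two $\tfrac14$ contributions, so your version, anchored at $z=\tfrac14$, $T=\tfrac34$, is the one that actually verifies, e.g.\ $\tfrac1{16}+\tfrac12=\bigl(\tfrac34\bigr)^2$.
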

\begin{proof}
It is clear that $P=(\frac{1}{2},\frac{1}{2},z)\in \Lambda$ if and only if
\begin{equation*}
\frac{1}{4}+z^2=T^2,
\end{equation*}
for some rational $T$. This equation represents a conic with rational point
$(z,T)=(0,\frac{1}{2})$, and so is parameterizable, for example, by:
\begin{equation*}
z=\frac{1-u^2}{4u},\quad T=\frac{1+u^2}{4u}.
\end{equation*}
To finish the proof, note that for the rational map $z:\R\ni u\mapsto \frac{1-u^2}{4u}\in\R$ we have $\overline{z(\Q)}=\R$. This implies that $\Lambda = \{(\frac{1}{2},\frac{1}{2},z(u)):\;u\in\Q\setminus\{0\}\}$ is dense in $\lambda(\R)$.
\end{proof}

%

\begin{thm}\label{prop2}
Let $\pi$ be the plane in $\R^{3}$ given by $\pi : x=\frac{1}{2}$. Then the set
\begin{equation*}
\Pi=\{M\in \pi(\Q):\;\mbox{ the distance } |MR| \mbox{ is rational for all } R\in A\}
\end{equation*}
is dense in $\pi(\R)$.
\end{thm}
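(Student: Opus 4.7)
The plan is to exploit the reflective symmetry of $\pi$ in the variable $x$ to reduce the four distance conditions to two, organize the resulting surface as a pencil of conics in the $(y,z)$-plane indexed by $t=P+Q$, and verify that this pencil sweeps out all of $\R^{2}$, so that the Euclidean density of rational points on each individual conic propagates to density of $\Pi$ in $\pi(\R)$.

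First I would write $M=(\tfrac{1}{2},y,z)$; then $M$ is equidistant from $(0,0,0)$ and $(1,0,0)$, and from $(0,1,0)$ and $(1,1,0)$, leaving just two independent distance conditions
\[ P^{2}=\tfrac{1}{4}+y^{2}+z^{2},\qquad Q^{2}=\tfrac{1}{4}+(1-y)^{2}+z^{2}. \]
Subtracting gives $P^{2}-Q^{2}=2y-1$; the substitution $t=P+Q$ rationally parametrizes $P,Q$ as $P=(t^{2}+2y-1)/(2t)$, $Q=(t^{2}+1-2y)/(2t)$. Substituting into the first equation and clearing denominators produces the single master relation
\[ (2tz)^{2}+(t^{2}-1)(2y-1)^{2}=t^{2}(t^{2}-2). \]
For any rational $t$ such that $t^{2}-2=u^{2}$ for some $u\in\Q$, this defines a conic $\cal{E}_{t}$ in the $(y,z)$-plane with the visible rational point $(\tfrac{1}{2},u/2)$, so $\cal{E}_{t}(\Q)$ is Euclidean-dense in $\cal{E}_{t}(\R)$ by the standard parametrization of a rational conic. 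Such values of $t$ are themselves parametrized by positive rationals $s$ via $t=s+1/(2s)$, $u=s-1/(2s)$, and run densely through $[\sqrt{2},\infty)$.

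Next I would show that the real family $\{\cal{E}_{t}:t\geq\sqrt{2}\}$ covers all of $\R^{2}$. Setting $\xi=y-\tfrac{1}{2}$, $\eta=z$, and $T=t^{2}$, the master relation becomes
\[ T^{2}-2(2\xi^{2}+2\eta^{2}+1)T+4\xi^{2}=0, \]
whose discriminant factors as $16\bigl[(\xi-\tfrac{1}{2})^{2}+\eta^{2}+\tfrac{1}{4}\bigr]\bigl[(\xi+\tfrac{1}{2})^{2}+\eta^{2}+\tfrac{1}{4}\bigr]>0$. Two positive real roots exist, and a short algebraic manipulation (reducing to the manifestly nonnegative inequality $4\xi^{2}+8\eta^{2}\geq 0$) shows that the larger root $T_{+}$ is always at least $2$. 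Hence every $(y_{0},z_{0})\in\R^{2}$ lies on a real conic $\cal{E}_{t_{+}}$ with $t_{+}=\sqrt{T_{+}}\geq\sqrt{2}$. Density follows: given $\eps>0$, I choose a positive rational $s$ with $t=s+1/(2s)$ close enough to $t_{+}$ that $\cal{E}_{t}$ passes within $\eps/2$ of $(y_{0},z_{0})$, and then pick a rational point on $\cal{E}_{t}$ within $\eps/2$ of the nearest point of $\cal{E}_{t}$ to $(y_{0},z_{0})$.

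The main obstacle I anticipate is precisely the verification that $T_{+}\geq 2$ for every $(\xi,\eta)\in\R^{2}$: this is the only substantive algebraic content in the density argument and is what guarantees the pencil covers the whole $(y,z)$-plane rather than a proper open subregion. Everything else reduces to continuity of the pencil in $t$ together with the classical density of rational points on a rational conic.
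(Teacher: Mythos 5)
Your proposal is correct, and it takes a genuinely different and substantially more elementary route than the paper. The paper treats the surface $4z^2=-2+2P^2-P^4+2(P^2+1)Q^2-Q^4$ as a degree-two del Pezzo surface, fibres it by fixing $P=P_0(u,v)$ along an explicitly constructed two-parameter family of points, obtains an elliptic curve in $(Q,z)$ on each fibre, and then combines Silverman's specialization theorem with Hurwitz's density theorem, together with a limiting argument over sequences $(u_n,v_n)$ needed because the exceptional set of specializations is hard to control (the authors devote a long remark to exactly this difficulty). You instead exploit the identity $H(P,Q)=-2+2(P^2+Q^2)-(P^2-Q^2)^2$, so the plane sections $P+Q=t$ of the quartic surface are conics: your master relation $(2tz)^2+(t^2-1)(2y-1)^2=t^2(t^2-2)$ exhibits a pencil of ellipses in $(y,z)$, each carrying the visible rational point $(y,z)=(\tfrac12,\tfrac{u}{2})$ whenever $t^2-2=u^2$, and such $t$ are dense in $[\sqrt2,\infty)$ via $t=s+1/(2s)$. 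I checked the computations: the master relation, the factorization of the discriminant of $T^2-2(2\xi^2+2\eta^2+1)T+4\xi^2$, and the reduction of $T_+\ge 2$ to $4\xi^2+8\eta^2\ge 0$ are all correct, so the pencil does sweep out all of $\R^2$ and the density of rational points on each split conic transfers to density of $\Pi$. What your approach buys is the complete elimination of elliptic-curve machinery (no ranks, no Silverman, no Hurwitz, no exceptional specializations); the paper's heavier method is more robust when no conic-bundle structure with a dense set of split fibres is available, but here yours is simpler and cleaner. In a final write-up you should just make explicit (i) the degenerate case $(y_0,z_0)=(\tfrac12,0)$, where $\cal{E}_{t_+}$ collapses to a point and the nearby ellipses $\cal{E}_t$ are themselves contained in a small neighbourhood of $(\tfrac12,0)$, and (ii) the continuity in $t$ of the family of ellipses used to transfer proximity, which follows from their explicit trigonometric parametrization.
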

\begin{proof}
Points $(\frac{1}{2},y,z)$ which lie in $\Pi$ are in one to one correspondence with
rational points on the intersection of the following two quadric surfaces in $\R^{4}$:
\begin{equation}\label{sys1}
\frac{1}{4}+y^2+z^2=P^2,\quad \frac{1}{4}+(1-y)^2+z^2=Q^2.
\end{equation}
Subtracting the second equation from the first gives $y=\frac{1}{2}(P^2-Q^2+1)$. So, on
eliminating $y$, the problem of finding rational solutions of (\ref{sys1}) is equivalent to
finding rational points on the surface $S$ given by the equation
\begin{equation}
\label{surfaceS}
S:\;4z^2=-2+2P^2-P^4+2(P^2+1)Q^2-Q^4=:H(P,Q).
\end{equation}
From a geometric point of view, the (homogenized version) of the surface $S$ represents
a del Pezzo surface of degree two which is just a blowup of seven points lying in general position
in $\mathbb{P}^{2}$. In particular, this implies that the surface is geometrically rational
which means that it is rational over $\C$. Note that this immediately implies the potential density
of rational points on $S$, which means that there is a finite extension $K$ of $\Q$ such that $S(K)$
is dense in the Zariski topology. However, we are interested in the density of rational points in
the Euclidean topology, and it seems that there is no way to use the mentioned property in order to
address this. We thus provide alternative reasoning. \\ \\
First, from (\ref{sys1}) we have the inequalities $|P| \geq 1/2$, $|Q| \geq 1/2$,
and because $H(P,Q)=H(\pm P, \pm Q)$ we may suppose without loss of generality that $P \geq 1/2$,
$Q \geq 1/2$.
We have the point on $\cal{S}$ defined by $(P_0(u,v),Q_0(u,v),z_0(u,v))=$
\[ \left( \frac{u^4+1-4\frac{u(u^2-1)}{u^2+1}v+2 v^2}{4(u^2-1)v}, \quad \frac{u^4+1+4\frac{u(u^2-1)}{u^2+1}v+2v^2}{4(u^2-1)v}, \quad \frac{u^4+1-2v^2}{4(u^2+1)v} \right); \]
and in the domain $\cal{D}:=\{(u,v)\in\R^{2}:\;u>1,v>0\}$, it is straightforward to verify that $P_0(u,v)$ has a single extremum
at the point
\[ (u_0,v_0)=(\alpha+\alpha^2, \; (1+\alpha)(1+\alpha^2)), \qquad \alpha^2=\frac{1+\sqrt{5}}{2}. \]
This point is a local minimum, with minimum value $P_0(u_0,v_0)=\frac{1}{2}$.
Since $P_0(u,v)$ is a continuous function in $\cal{D}$ and
$\lim_{u \rightarrow 1^+} P_0(u,v) = \lim_{v \rightarrow 0^+} P_0(u,v)=\infty$, it follows that the set of values
$\{P_0(u,v): u \in \Q \cap (1,\infty), v \in \Q \cap (0, \infty)\}$ is dense in the real interval
$(\frac{1}{2}, \infty)$.
Next, consider the equation
\[ C: \; 4Z^2=H(P_0(u,v),Q), \]
which we regard as defining a curve $C$ over $\Q(u,v)$.  The curve possesses the point
$(Q,Z)=(Q_0(u,v), z_0(u,v))$, and has cubic model
\begin{align*}
E: & y^2 = x^3-\big((1+u^2)^2(1+u^4)^2+8u(1-u^8)v+4(5+6u^2-14u^4+6u^6+5u^8)v^2+ \\
& 16u(1-u^4)v^3+4(1+u^2)^2v^4 \big) x^2+16(u^4-1)^2v^2 \big( (1+u^2)(1+u^4)+2(1-u)(1+u)^3v+ \\
& 2(1+u^2)v^2 \big) \big( (1+u^2)(1+u^4)-2(1-u)^3(1+u)v+2(1+u^2)v^2 \big) x.
\end{align*}

It is easy to check that if $u',v'\in\Q$, then the curve $E_{u',v'}$ obtained from $E$ by the
specialization $u=u', v=v'$ is singular only when $u=1$ or $v=0$. However, the sets
$\{1\}\times \Q$, $\Q\times\{0\}$ have empty intersection with $\cal{D}$. Thus, for all
$(u',v')\in (\Q\times\Q)\cap \cal{D}=:\cal{D}'$, the specialized curve $E_{u',v'}$ is an elliptic curve.
Furthermore, we note that
for each $u',v' \in \Q$, $E_{u',v'}$ has three points of order 2 defined over $\R$
(this is a simple consequence of the positivity of the discriminant of the polynomial defining
the curve $E$), with $x$-coordinates $0<r_1<r_2$, so that $(0,0)$ lies on the bounded component of the
curve. \\ \\
The image $R_{u,v}$ on $E$  of the point $(-Q_0(u,v), z_0(u,v))$ is of infinite order as element
of the group $E(\Q(u,v))$. For any given $u'\in\Q\cap (1,\infty)$ it is straightforward to compute
the set of rational numbers $v'\in \Q\cap (0,\infty)$ such that the point $R_{u',v'}$ is of finite order
on $E_{u',v'}$; this set is finite in consequence of Mazur's Theorem.
Applying Silverman's Theorem, for given $u'\in\Q\cap (1,\infty)$ then for all but finitely many
$v'\in \Q$ the point $R_{u',v'}$ is of infinite order on the curve $E_{u',v'}$.

Now choose sequences
$(u_{n})_{n\in\N}$, $(v_{n})_{n\in\N}$ of rational numbers such that
\[ u_{n} \in \Q\cap (1,\infty), \lim_{n\rightarrow +\infty}u_{n}=\alpha+\alpha^2, \quad v_{n} \in \Q\cap (0,\infty), \lim_{n\rightarrow +\infty} v_{n}=(1+\alpha)(1+\alpha^2), \]
so that $\lim_{n\rightarrow +\infty}P_{0}(u_{n},v_{n})=1/2$.

With $R_{u_{n},v'}$ of infinite order on $E_{u_{n},v'}$, then either $R_{u_{n},v'}$ or
$R_{u_{n},v'}+(0,0)$ lies on the bounded component of the curve, and we can apply the Hurwitz Theorem
as before to deduce that the set $E_{u_{n},v'}(\Q)$ is dense in
the set $E_{u_{n},v'}(\R)$. This immediately implies that the set $E(\Q)$ is dense in the set $E(\R)$.
Because $E$ is birationally equivalent to $C$, we get that $C(\Q)$ is dense in $C(\R)$.
Because
$$
\bigcup_{n\in\N}\{P_0(u_{n},v'):  v'\in \Q \cap (0, \infty)\}
$$
is dense in $(\frac{1}{2}, \infty)$, it follows that
$\cal{S}(\Q) \cap \{(P,Q,z): P>1/2\}$ is dense in the Euclidean topology
in the set $\cal{S}(\R) \cap \{(P,Q,z): P>1/2\}$. Our theorem follows.
\end{proof}

\begin{rem}
\rm{
The proof could be simplified if we were able to demonstrate finiteness of the set of
$(u',v') \in \cal{D}'$ for which the point $R_{u',v'}$ is of finite order
in $E(\Q)$, for then the theorems of Silverman and Hurwitz could be applied directly
without the necessity of selecting limiting sequences. However, this computation
is difficult.  By Mazur's Theorem the point $R_{u',v'}$ on $E_{u',v'}$ is of finite order provided
$mR_{u',v'}=\cal{O}$ for some $m\in\{2,\ldots, 10,12 \}$. Let
$$
m R_{u,v}=\left(\frac{x_{m}}{d_{m}^2},\frac{y_{m}}{d_{m}^3}\right),
$$
where $x_{m}, y_{m}, d_{m}\in\Q[u,v]$ for $m=2,\ldots,10,12$. We consider the denominator of the $x$- coordinate of the point $mR_{u',v'}$
and define the curve $C_{m}:\;d_{m}(u,v)=0$. The set $C_{m}(\cal{D}')$ of points in $\cal{D}'$
lying on $C_{m}$ parameterize those pairs $(u',v') \in \cal{D}'$ which lead to $R_{u',v'}$ of order
(dividing) $m$.  Consider the map
$$
\Phi:\;C_{m}(\cal{D}') \ni (u,v)\mapsto u\in\Q.
$$
and put
\begin{equation*}
B:=\bigcup _{m=2}^{12}\Phi(C_{m}(\cal{D}')),
\end{equation*}
where for $m=7, 11$ we put $C_{m}(\cal{D}')=\emptyset$. Indeed, the case $m=7$ is impossible due to the existence of the rational point $(0,0)$ of order 2 on $E_{u',v'}$ and the fact that the torsion group of $E_{u',v'}$ cannot be isomorphic to $\Z_{2}\times \Z_{7}\simeq \Z_{14}$.
From the definition of $B$, if $u'\not\in B$ then the point $R_{u',v'}$ is of infinite order on
$E_{u',v'}$ for all rational $v'>0$. In theory at least it is possible to give a precise description of the set $B$.
Indeed, for given $m$ the polynomial $d_{m}$ may be factorized as
$d_{m}=f_{1,m}\cdot\ldots\cdot f_{k_{m},m}$ in $\Q[u,v]$, where $f_{i,m}$ is irreducible in $\Q[u,v]$.
Thus $d_{m}(u,v)=0$ if and only if $f_{i,m}(u,v)=0$ for some $i\in\{1,2,\ldots,k_{m}\}$. The equation
$f_{i,m}(u,v)=0$ defines an irreducible curve, say $C_{i,m}$, and thus
\begin{equation*}
B=\bigcup_{m=2}^{12}\bigcup_{i=1}^{k_{m}}\Phi(C_{i,m}(\cal{D}'))
\end{equation*}
(where we define $C_{i,7}(\cal{D}')=C_{i,11}(\cal{D}')=\emptyset$).
For example, we have $d_{2}(u,v)=(u^2-1)(u^4 - 2v^2 + 1)f_{4,2}(u,v)f_{5,2}(u,v)$. The curve
$C_{3,2}:\;u^4 - 2v^2 + 1=0$ is of genus 1 and the only rational points on $C_{3,2}$ satisfy
$|u|=|v|=1$. The genus of $C_{4,2}$ is 3 and the genus of $C_{5,2}$ is 19; thus by Faltings's Theorem
these curves contain only finitely many rational points. However, we are unable to compute the
corresponding sets. Matters are even worse for $m\geq 3$. It is a highly non-trivial task to compute
the factorization of $d_{m}$ and even when this has been done, it is still necessary to compute
the genus of the corresponding curves.
When $m=3$ we were able to compute that $d_{3}(u,v)=(u^2-1)(u^4 - 2v^2 + 1)f_{4,3}(u,v)$,
where $f_{4,3}$ is of degree 72. A rather long computation was needed in order to check that
the genus of $C_{4,3}$ is $\geq 65$. To get this inequality we reduce the curve $C_{4,3}$ modulo 5
and observe that $f_{4,3}\in\mathbb{F}_{5}[u,v]$ is irreducible and
$\op{deg}_{\Q[u,v]}f_{4,3}=\op{deg}_{\mathbb{F}_{5}[u,v]}f_{4,3}$. We thus get the inequality
$\op{genus}_{\C}(C_{4,3})\geq \op{genus}_{\mathbb{F}_{5}}(C_{4,3})=65$, where the last equality
was obtained via computation in Magma.
When $m=4$ we have $d_{4}(u,v)=(u^2-1)(u^4 - 2v^2 + 1)f_{4,4}(u,v)f_{5,4}(u,v)$, where
$\op{deg}f_{4,4}=36$ and $\op{deg}f_{5,4}=72$. Using similar reasoning as for $m=3$,
the genus of $C_{4,4}$ is $\geq 29$ and the genus of $C_{5,4}$ is $\geq 113$ (in this case we
performed calculations over $\mathbb{F}_{3}$).  When $m=5$ we have
$d_{5}(u,v)=(u^2-1)(u^4 - 2v^2 + 1)f_{4,5}(u,v)$, where $\op{deg}f_{4,5}=216$. We were unable to
finish the genus calculations in this case: Magma was still running after three days. However,
we expect that these computations can be performed and believe that in each case the genus of
the corresponding curve is $\geq 2$ which would imply (via the Faltings theorem) that the set
$B$ is finite.
}
\end{rem}

\begin{rem}{\rm
The combination of the theorems of Hurwitz and Silverman which allows proof of the density results
is a very useful tool and can be used in other situations too; see~\cite{Be1,BrUl,Ul}.
}
\end{rem}

\begin{rem}{\rm
It is clear that the same result as in Proposition \ref{prop2} can be obtained for the plane given by the
equation $y=\frac{1}{2}$.
}
\end{rem}

We are able to prove the following result (which falls short of being a density statement)
concerning the existence of rational points on the plane $x=y$ with rational distances to
elements of $A$.
\begin{prop}\label{prop3}
Let $\pi$ be the plane in $\R^{3}$ given by $\pi : x=y$. Then the set
\begin{equation*}
\Pi=\{P\in \pi(\Q):\;\mbox{ the distance } |PQ| \mbox{ is rational for all } Q\in A\}
\end{equation*}
contains images of infinitely many rational parametric curves.
\end{prop}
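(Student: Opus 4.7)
The plan is to adapt the elliptic-fibration strategy used for Theorem~\ref{thm2-1} and Theorem~\ref{prop2}. Setting $M=(x,x,z)$ and exploiting the reflection symmetry $|M-(1,0,0)|=|M-(0,1,0)|$, the problem reduces to three rational distance conditions
\begin{equation*}
P^2=2x^2+z^2,\quad Q^2=P^2-(2x-1),\quad R^2=P^2-2(2x-1).
\end{equation*}
Subtracting adjacent pairs yields the classical arithmetic progression condition $P^2+R^2=2Q^2$, which I would parameterize rationally via $(P,Q,R)=\lambda(p^2+2pq-q^2,\,p^2+q^2,\,p^2-2pq-q^2)$, obtaining $x=2\lambda^2 pq(p^2-q^2)+\frac{1}{2}$ automatically rational.

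The remaining condition $z^2=P^2-2x^2$, after invoking the identity $(p^2+2pq-q^2)^2-4pq(p^2-q^2)=(p^2+q^2)^2$, reduces to
\begin{equation*}
2z^2=2(p^2+q^2)^2\lambda^2-16p^2q^2(p^2-q^2)^2\lambda^4-1.
\end{equation*}
For each rational pair $(p,q)$ this defines a curve $\cal{E}_{p,q}$ in $(\lambda,z)$, generically an elliptic curve. Its discriminant, as a quadratic in $\lambda^2$, equals $(p^2+q^2)^4-16p^2q^2(p^2-q^2)^2(2z^2+1)$, which at $z=0$ collapses to the perfect square $[(p^2+2pq-q^2)(p^2-2pq-q^2)]^2$. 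This factorization enables an explicit Weierstrass model for $\cal{E}_{p,q}$ over $\Q(p,q)$ with a natural base point.

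The plan is then to exhibit a $\Q(p,q)$-rational point of infinite order on the generic fiber. A natural candidate arises from the degenerate limit $(p,q)=(1,0)$, where the AP parametrization collapses to $(P,Q,R)=\lambda(1,1,1)$ and recovers the Proposition~\ref{prop1} family $x=\frac{1}{2}$. By Silverman's specialization theorem, this point persists as an element of infinite order for all but finitely many rational $(p_0,q_0)$; each such specialization yields a curve $\cal{E}_{p_0,q_0}$ of positive Mordell--Weil rank, and pulling back its rational points via the parametrization produces an algebraic curve in $\Pi$ supporting a rational parametrization. As $(p_0,q_0)$ varies over an infinite set one obtains infinitely many geometrically distinct such curves, proving the proposition. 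The main obstacle will be producing the generic non-torsion point explicitly and verifying its infinite order: this requires Mazur-based torsion control combined with specialization to a curve of tractable conductor, in the same spirit as the corresponding arguments in Theorems~\ref{thm2-1} and~\ref{prop2}.
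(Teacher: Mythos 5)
Your reduction coincides with the paper's: on the plane $x=y$ the four distances collapse to three, the pairwise differences give the arithmetic--progression condition $P^2+S^2=2Q^2$, its standard parametrization makes $x$ rational automatically, and the last condition $z^2=P^2-2x^2$ becomes an elliptic quartic in the remaining parameter over a function field (your $(p,q,\lambda)$ is the paper's $(m,1,1/\tau)$). The proof breaks down at the two steps that actually carry the weight. First, you never produce a rational point on the generic fiber. Your quartic $2z^2=2(p^2+q^2)^2\lambda^2-16p^2q^2(p^2-q^2)^2\lambda^4-1$ has negative leading coefficient (so no rational point at infinity), and your own discriminant identity at $z=0$ gives $\lambda^2=1/(8p^2q^2)$ or $\lambda^2=1/(2(p^2-q^2)^2)$, so the corresponding $\lambda$ involve $\sqrt{2}$: the factorization does \emph{not} furnish a rational base point, and hence no Weierstrass model with a marked section. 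The candidate point you propose lives on the degenerate fiber $(p,q)=(1,0)$, where the quartic collapses to a conic. Silverman's specialization theorem transports a point on the \emph{generic} fiber down to almost all special fibers; it cannot propagate a point on one special fiber across to the others, so there is nothing to specialize. This missing generic point is exactly the obstruction the paper confronts, and it is overcome there only by an explicit quadratic base change $m=4k/(2+k^2)$, after which the quartic over $\Q(k)$ visibly acquires a rational point and then an explicit point of infinite order on its cubic model $\cal{E}$.

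Second, even granting positive rank for the specialized curves $\cal{E}_{p_0,q_0}$ over $\Q$, their rational points form countable dense subsets of genus-one curves; a genus-one curve admits no rational parametrization, so this does not produce ``images of rational parametric curves'' in $\Pi$, only infinitely many isolated points. To get the curves the proposition asserts, the base parameter must stay free: a non-torsion point on the fiber over $\Q(k)$ yields, for each integer multiple, a section of the fibration, i.e.\ a genuine rational curve $k\mapsto(x(k),x(k),z(k))$ contained in $\Pi$, and the infinitude of multiples gives the infinitude of such curves. That is the paper's concluding argument, and your plan as written does not reach it.
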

\begin{proof}
We now have
\begin{equation}
\label{prop3.4}
\begin{cases}
\begin{array}{lll}
2 x^2 + z^2    & = &P^2, \\
2 x^2-2x+1+z^2 & = &Q^2, \\
2 (x-1)^2 +z^2 & = & S^2.
\end{array}
\end{cases}
\end{equation}
Thus
\[ P^2-2 Q^2+S^2= 0, \qquad x=1/2+(P^2-Q^2)/2, \qquad z^2 = P^2-2 x^2. \]
The former is parametrized by
\[ \tau \;P = m^2+2m-1, \quad \tau \;Q=m^2+1, \quad \tau \;S=m^2-2m-1, \]
giving
\[ x=1/2-2m(1-m^2)/\tau^2, \qquad (\tau^2z)^2 = 1/2 (\tau^2-8 m^2)(-\tau^2+2(1-m^2)^2). \]
Regard the latter as an elliptic quartic over $\Q(m)$. Under the quadratic base change
$m=4k/(2+k^2)$, the curve becomes, with $\tau=t/(2+k^2)^2$, $Z=t^2 z$,
\[ \cal{C}:\;Z^2 = -\frac{1}{2}(t^2-128k^2(2+k^2)^2)(t^2 - 2(4-12k^2+k^4)^2), \]
which has a point at
\begin{align*}
(t,Z) =  \Big(& \frac{4(2+k^2)^2(4-12k^2+k^4)}{(12-4k^2+3k^4)},   \\
               & \frac{4(4-k^4)(4-12k^2+k^4)(16-352k^2-104k^4-88k^6+k^8)}{(12-4k^2+3k^4)^2} \Big).
\end{align*}
A cubic model of the curve is
\[\cal{E}:\; Y^2 = X(X - (4-16k-12k^2-8k^3+k^4)^2) (X - (4+16k-12k^2+8k^3+k^4)^2), \]
with point of infinite order $Q=(X,Y)$, where
\begin{align*}
X&=\frac{(2+k^2)^2(12-4k^2+3k^4)^2}{(-2+k^2)^2}, \\
Y&= \frac{8(2+k^2)(-16+20k^2+k^6)(-4-5k^4+k^6)(12-4k^2+3k^4)}{(-2+k^2)^3}.
\end{align*}
We do not present explicitly the map $\phi:\;\cal{C}\rightarrow \cal{E}$ because
the formula is unwieldy. Note that the existence of $Q$ of infinite order on $\cal{E}$ implies
the Zariski density of rational points on the surface $\cal{E}$ (using the same reasoning as
in the proof of the previous theorem).
Computing $\phi^{-1}(mQ)$ for $m\in\Z$, and then the expressions for $x, z$, we get rational
parametric solutions of the system (\ref{prop3.4}). This observation finishes the proof.
\end{proof}

\noindent
\begin{rem}{\rm
The simplest parametric solution  of (\ref{prop3.4}) that we find is
\[ x=y=\frac{(4+2k-2k^2+k^3)(2-2k+k^2+k^3)(4-16k-12k^2-8k^3+k^4)}{2(2+k^2)^3(4-12k^2+k^4)}, \]
\[ z=\frac{(2-k^2)(16-352k^2-104k^4-88k^6+k^8)}{4(2+k^2)^3(4-12k^2+k^4))}. \]
}
\end{rem}

\section{Points in $\Q^3$ at rational distance to the vertices of the unit square}\label{sec4}

We consider here the problem of finding points in $\Q^3$ that lie at rational distance to the vertices
of the unit square, i.e. we do not assume any additional constraints on the coordinates of the points.
From the previous section we know that there is an infinite set $\cal{M}$ of rational curves lying in
the plane $x=1/2$ (or in the plane $x=y$) with the property that each rational point on each curve
$C\in\cal{M}$ has rational distance to the vertices of the unit square. A question arises whether in
the more general situation considered here we can expect the existence of rational surfaces having the
same property. Moreover, can any density result be obtained in this case? Unfortunately, we are unable
to prove any density result. However, we show that there are many rational points in $\Q^{3}$ lying at
rational distance to the vertices of the unit square. More precisely, we show the following.

\begin{thm}\label{unirational}
Put $A=\{(0,0,0),\;(0,1,0),\;(1,0,0),\;(1,1,0)\}$ and consider the set
\begin{equation*}
\cal{F}:=\{P\in \Q^3:\;\mbox{the distance}\;|PQ|\;\mbox{is rational for all}\;Q\in A\}.
\end{equation*}
Then the algebraic variety parameterizing the set $\cal{F}$ is unirational over $\Q$.
\end{thm}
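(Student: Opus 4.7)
First I would turn the system of four rationality conditions into a single square condition on a rational variety. Letting $D_1,\ldots,D_4$ denote the four distances, pairwise subtraction of the equations $D_i^2=(x-a_i)^2+(y-b_i)^2+z^2$ cancels $x^2+y^2+z^2$ and produces the linear relation $D_1^2+D_4^2=D_2^2+D_3^2$ together with
\[
x=\frac{1+D_1^2-D_3^2}{2},\qquad y=\frac{1+D_1^2-D_2^2}{2};
\]
the surviving identity $z^2=D_1^2-x^2-y^2$ is one further square condition. The ambient 3-fold quadric $D_1^2+D_4^2=D_2^2+D_3^2$ is smooth and contains the obvious rational point $D_1=D_2,\,D_3=D_4$, hence is rational over $\Q$; a concrete parametrization such as $D_1-D_2=m$, $D_1+D_2=\beta n$, $D_3-D_4=\beta m$, $D_3+D_4=n$ expresses the $D_i$ as rational functions of three parameters $(\beta,m,n)$. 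After substitution, the theorem reduces to the unirationality over $\Q$ of the affine hypersurface $W\subset\mathbb{A}^4$ cut out by an explicit equation of the form $z^2=F(\beta,m,n)$.

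Next I would fibre $W$ over $\mathbb{A}^1_\beta$, making each fibre $W_\beta$ an affine double cover of the $(m,n)$-plane. The parametric families produced in Section~\ref{sec3} (on $x=1/2$ and on $x=y$) correspond to rational points of $W$ sitting over the degenerate values $\beta\in\{0,\pm 1\}$; the goal is to build from one of these a $\Q(\beta)$-rational curve on the generic fibre $W_\beta$. The concrete plan is to restrict to a line $n=\lambda m+\mu$ in the $(m,n)$-plane, chosen so that $F(\beta,m,\lambda m+\mu)$ contains an evident rational square factor, leaving a conic in $(m,z)$ whose rational point (supplied by one of the Section~\ref{sec3} constructions) gives a rational parametrization of the curve. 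Coupling this one-parameter family inside each fibre with the $\beta$-pencil produces a dominant rational map $\mathbb{A}^3\dashrightarrow W$, and hence establishes the theorem.

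The principal difficulty I foresee is ensuring that the resulting parametrization is genuinely dominant and not contained in one of the symmetric subvarieties ($x=1/2$ or $x=y$) already handled in Section~\ref{sec3}. I would certify dominance at the end by computing the Jacobian of the final parametrization at a single numerical specialization and checking that its rank equals $3$; if the first choice of $\lambda,\mu$ (or of the base rational point on $W_\beta$) produces a degenerate image, the ansatz is varied until a section transverse to the known symmetric loci is obtained.
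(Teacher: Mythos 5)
Your reduction is the same as the paper's: eliminating $x,y$ and parametrizing the quadric $D_1^2-D_2^2=D_3^2-D_4^2$ (your $(\beta,m,n)$ is, up to scaling, the paper's $P=uX+Y$, $Q=uX-Y$, $R=uY+X$, $S=uY-X$) leads to the same threefold $V^2=G(u,X,Y)$, fibred over the $u$-line into del Pezzo surfaces of degree $2$. The gap is in the second half. Your plan needs a $\Q(\beta)$-rational curve on the \emph{generic} fibre $W_\beta$, obtained from a tangent line $n=\lambda m+\mu$ along which $F$ acquires a square factor, plus a rational point on the residual conic "supplied by one of the Section~\ref{sec3} constructions." Neither ingredient is available. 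To cut out such a tangent line over $\Q(\beta)$ and to solve the residual conic you essentially need a $\Q(\beta)$-rational point on $W_\beta$ in the first place (this is precisely the hypothesis of Manin's unirationality theorem for degree-$2$ del Pezzo surfaces), and the paper explicitly observes that no general $\Q(u)$-rational point on the generic fibre appears to exist. The Section~\ref{sec3} families do not rescue this: the locus $x=1/2$ is $Y=0$ (your $m=0$), where rational points were produced only for rational \emph{specializations} of the parameters via elliptic-fibration arguments, and the locus $x=y$ required a quadratic base change before any point appeared; neither gives a section over the whole $\beta$-line.

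The paper's actual resolution is a workaround you would still need to discover: first construct a rational curve $(X'(t),Y'(t))$ inside the single special fibre $u=u_0=2$ by an osculating-curve ansatz through the explicit point $(u_0,X_0,Y_0,V_0)=(2,\tfrac1{12},\tfrac{19}{36},\tfrac7{27})$ (setting $X=T+X_0$, $Y=pT+Y_0$, $V=qT^2+tT+V_0$ and solving for $p,q,T$); then exploit the symmetry $G(u,X,Y)=G(X/Y,\,uY,\,Y)$ to convert that curve into a non-constant base change $u=\phi(t)=X'(t)/Y'(t)$ for which the base-changed family has the $\Q(t)$-point $(u_0Y'(t),Y'(t))$; only then does Manin's theorem apply to give $\Q(t)$-unirationality and hence $\Q$-unirationality. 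Without this base-change step (or some substitute for producing a point on the generic fibre after a rational cover of the base), your argument does not close.
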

%
\begin{proof}
It is clear that points in $\cal{F}$ are in one to one correspondence
with rational points on the intersection in $\R^{7}$ of the following four quadratic threefolds:
\begin{equation}\label{sys2}
\begin{cases}
\begin{array}{lll}
  x^2+y^2+z^2 & = & P^2,  \\
  (1-x)^2+y^2+z^2 & = & Q^2, \\
  x^2+(1-y)^2+z^2 & = & R^2, \\
  (1-x)^2+(1-y)^2+z^2 & = & S^2.
\end{array}
\end{cases}
\end{equation}
We immediately have
\begin{equation}\label{xysol}
x=\frac{1}{2}(P^2-Q^2+1),\quad y=\frac{1}{2}(P^2-R^2+1),
\end{equation}
and $P^2-R^2=Q^2-S^2$. All rational solutions of the latter are given by
\begin{equation*}
P=uX+Y,\quad Q=uX-Y,\quad R=uY+X,\quad S=uY-X,
\end{equation*}
and then from (\ref{xysol}),
\begin{equation*}
x=2 u X Y+\frac{1}{2},\quad y=\frac{1}{2}(u^2-1)(X^2-Y^2)+\frac{1}{2}.
\end{equation*}
Finding points on the system (\ref{sys2}) now reduces to studying the algebraic variety
$$
\cal{S}:\;V^2=G(u,X,Y),
$$
 where $V=2z$ and  the polynomial $G$ is given by
\begin{equation*}
G(u,X,Y)=-2+2(u^2+1)(X^2+Y^2)-(u^2-1)^2(X^2-Y^2)^2-16u^2X^2Y^2.
\end{equation*}
The dimension of $\cal{S}$ is 3. However, we can view the variety $\cal{S}$ as a del Pezzo surface of
degree two defined over the field $\Q(u)$. It is known then that the existence of a sufficiently general
$\Q(u)$-rational point on $\cal{S}$ implies $\Q(u)$-unirationality, and in consequence $\Q$-unirationality,
of $\cal{S}$ (see Manin~\cite[Theorem 29.4]{Man}). However, it seems that there is no general
$\Q(u)$-rational point on $\cal{S}$. Thus it is natural to ask how one can construct a rational base
change $u=\phi(t)$ such that the surface $\cal{S}_{\phi}:\;V^2=G(\phi(t),X,Y)$ defined over the field
$\Q(t)$, contains a $\Q(t)$-rational point. We present the following approach to this problem. Suppose that
$Q_{0}=(u_{0},X_{0},Y_{0},V_{0})$ is a rational point with non-zero coordinates lying on $\cal{S}$.
We construct a parametric curve $\cal{L}$ lying on $\cal{S}$ as follows. Define $\cal{L}$ by equations
\begin{equation*}
\cal{L}:\;u=u_{0},\;X=T+X_{0},\;Y=pT+Y_{0},\;V=qT^2+tT+V_{0},
\end{equation*}
where $t$ is a rational parameter and $p,q,T$ are to be determined. With $u,X,Y,V$ so defined,
$V^2-G(u,X,Y)=\sum_{i=1}^{4}A_{i}(p,q)T^i$. The expression $A_{1}$ is linear in $p$ and takes the form
$A_{1}=pB_{1}+B_{0}+2tV_{0}$, where $B_{0}, B_{1}$ depend only on the coordinates of the point $Q$.
In particular, $A_{1}$ is independent of $q$; so the equation $A_{1}=0$ has a non-zero
solution for $p$ if and only if $B_{1}\neq 0$. The expression for $B_1$ is
\begin{equation*}
B_{1}=4Y_{0}((-u_0^4+10u_0^2-1)X_0^2+(u_0^2-1)^2Y_0^2-u_0^2-1).
\end{equation*}
Next, observe that $A_{2}=C_{2}p^2+C_{1}p+C_{0}+2qV_{0}+t^2$, where $C_{i}$ depend only on the coordinates
of the point $Q_{0}$ for $i=0,1,2$, and thus $A_2=0$ can be solved for $q$ precisely when $V_0$ is non-zero.
To sum up, the system $A_{1}=A_{2}=0$ has a non-trivial solution for $p,q$ as rational functions in
$\Q(t)$ when $B_1V_0 \neq 0$.
With $p,q$ computed in this way:
 \begin{equation*}
V^2-G(u,X,Y)=T^3(A_{3}(p,q)+A_{4}(p,q)T).
\end{equation*}
If $A_{3}A_{4}\neq 0$ as a function in $t$ then the expression for $T$ that we seek is given by
$T=-A_{3}(p,q)/A_{4}(p,q)$. Thus if the point $Q_{0}=(u_0,X_0,Y_0,V_0)$ satisfies certain conditions,
then there exists a rational curve on the surface $\cal{S}_{u_{0}}:\;V^2=G(u_{0},X,Y)$. Moreover,
the curve constructed in this manner can be used to produce rational expressions for $P,Q,R,S$ and
in consequence rational expressions for $x,y,z$ satisfying the system (\ref{sys2}).

Let $X=X'(t), Y=Y'(t)$ be parametric equations of the constructed curve. The polynomial $G$
is invariant under the mapping $(u,X,Y)\mapsto \Big(\frac{X}{Y},uY,Y\Big)$ and thus we can define
a non-constant base change $u=\phi(t)=X'(t)/Y'(t)$ such that the surface
$\cal{S}_{\phi}:\;V^2=G(\phi(t),X,Y)$ contains the $\Q(t)$-rational point $(X,Y)=(u_{0}Y'(t),Y'(t))$.
Using the cited result of Manin we get $\Q(t)$-unirationality of $\cal{S}_{\phi}$ and in consequence
$\Q$-unirationality of $\cal{S}$.

Thus in order to finish the proof it suffices to find a suitable point $Q_0$ on the threefold $\cal{S}$.
It is straightforward to check that all the required conditions on $Q_0$ are met on taking
$$
(u_{0},X_{0},Y_{0},V_{0})=\Big(2,\frac{1}{12},\frac{19}{36},\frac{7}{27}\Big).
$$
With this choice of $Q_0$, the expressions for $x,y,z$ arising from the constructed parametric curve
are as follows:
\begin{align*}
x = & 3(5522066829177276301427600 - 258403606687492419505600t \\
& + 24350105869790104153088t^2 - 930272613423360964576t^3 \\
& + 39295267680627366536t^4 - 1085485845235095088t^5 \\
& + 24133448660417792t^6 - 401146604231320t^7 + 3899504263625t^8)/\Delta^2, \\
y = & 30(3992136439221148602640 - 6939554120499388567712t \\
& + 117488065643083258096t^2 - 13393876262858078048t^3 \\
& + 411476041942299568t^4 - 13249457441223848t^5 \\
& + 681815047971100t^6 - 7562115944888t^7 + 337499289355t^8)/\Delta^2, \\
z = & 714(3779374597422498556400 + 529318935972209201600t \\
& - 977278343015269168t^2 + 1745565618326470736t^3 \\
& - 10290117484952896t^4 + 1635035001144368t^5 \\
& - 3620551914412t^6 + 458263598420t^7 + 118863425t^8)/\Delta^2,
\end{align*}
where
$$
\Delta=18(221769748580 - 3052768504t + 670128264t^2 - 6059132t^3 + 500425t^4).
$$
\end{proof}
\begin{rem}
{\rm
The point $(x,y,z)$ satisfies $0<x,y<1$ for values of $t$ satisfying $t<-10.9337$, or $t>28.2852$.
}
\end{rem}
Notwithstanding the large coefficient size in the above parameterization,
there seem to be many points in $\Q^3$ at rational distance to the vertices $A$ of the unit square.
A (non-exhaustive) search finds the following points $(x,y,z) \in \Q^3$ of height at most $10^4$,
$x \neq \frac{1}{2}$, $x \neq y$, having rational distances to the vertices $(0,0,0)$, $(0,1,0)$,
$(1,0,0)$, $(1,1,0)$ of the unit square, and which lie in the positive octant. We list only one point
under the symmetries $x \leftrightarrow 1-x, \;\; y \leftrightarrow 1-y, \;\; x \leftrightarrow y$.
\begin{center}
\begin{tabular}{rrr|rrr}
$x$ & $y$ & $z$  &  $x$ & $y$ & $z$ \\ \hline
41/27 & 77/108 & 28/27 & 1/35 & 37/105 & 17/140 \\
5/54 & 35/108 & 7/54 & 161/80 & 587/300 & 7/25 \\
83/125 & 549/500 & 14/75 & 37/156 & 987/2704 & 119/676 \\
1/189 & 283/756 & 31/189 & 232/189 & 493/756 & 59/189 \\
113/190 & 2369/1900 & 287/2850 & 202/195 & 213/325 & 161/1300 \\
383/348 & 5397/1682 & 2429/1682 & 571/476 & 2419/2975 & 94/425 \\
203/594 & 119/1188 & 469/594 & 1589/594 & 985/1188 & 427/594 \\
1/756 & 127/1512 & 307/756 & 1436/847 & 7967/3388 & 992/847 \\
127/1029 & 341/1372 & 307/343 & 251/1029 & 401/1372 & 223/343 \\
791/1210 & 5299/3630 & 2569/2420 & 1571/1210 & 7487/7260 &  509/1210 \\
1906/2541 & 4019/3388 & 360/847 & 2185/2541 & 3819/3388 & 345/847 \\
3059/2738 & 4487/5476 & 3059/8214 & & & \\ \hline
\end{tabular}
\begin{center}
Table 2: points in $\Q^3$ at rational distance to the vertices of the unit square
\end{center}
\end{center}

\bigskip

We are motivated to make the following conjecture.
\begin{conj}
Put $A=\{(0,0,0),\;(0,1,0),\;(1,0,0),\;(1,1,0)\}$ and consider the set
\begin{equation*}
\cal{F}:=\{P\in \Q^3:\;\mbox{the distance}\;|PQ|\;\mbox{is rational for all}\;Q\in A\}.
\end{equation*}
Then $\cal{F}$ is dense in $\R^3$ in the Euclidean topology.
\end{conj}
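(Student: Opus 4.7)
The plan is to upgrade the unirationality of Theorem \ref{unirational} to a Euclidean density statement by combining it with the slice-density machinery of Theorem \ref{prop2}. The strategy is to fiber $\cal{F}$ over its $x$-coordinate. For $c\in\Q$, set $\cal{F}_{c}:=\cal{F}\cap\{x=c\}$; it suffices to prove that
\[ X:=\{c\in\Q:\cal{F}_{c}\text{ is dense in the 2-plane }\{x=c\}\subset\R^{3}\} \]
is dense in $\R$, because any open ball $B\subset\R^{3}$ then meets some slice $\{x=c\}$ with $c\in X$, and density of $\cal{F}_{c}$ furnishes a point of $\cal{F}$ inside $B$.

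Density within a generic fiber would be handled by an analogue of Theorem \ref{prop2}. Substituting $x=c$ into (\ref{sys2}) gives $Q^{2}-P^{2}=S^{2}-R^{2}=1-2c$, a pair of conics admitting rational parametrizations by parameters $s,s'$ whenever $c\ne 1/2$, together with $y=\tfrac{1}{2}(1+P^{2}-R^{2})$ and $z^{2}=P^{2}-c^{2}-y^{2}$. Eliminating $y$ yields a surface $\cal{S}_{c}$ of the form $V^{2}=H_{c}(s,s')$ for an explicit polynomial $H_{c}\in\Q(c)[s,s']$. Following the template of Theorem \ref{prop2}, I would fiber $\cal{S}_{c}$ by a suitable one-parameter family of elliptic curves $\cal{E}_{c,\xi}$, exhibit a $\Q(c,\xi)$-rational section of infinite order, verify that some translate lies on a bounded real component of $\cal{E}_{c,\xi}(\R)$, and then apply Silverman specialization together with Hurwitz's density theorem to deduce that $\cal{S}_{c}(\Q)$ is dense in $\cal{S}_{c}(\R)$ for all but finitely many specializations.

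For the density of $X$ in $\R$, the parametrization $(x(t),y(t),z(t))$ produced in the proof of Theorem \ref{unirational} already defines a rational arc in $\cal{F}$ with non-constant $x(t)$, so $x(\Q)$ is dense in some interval of $\R$. Varying the base point $Q_{0}$ in that construction (any rational $Q_{0}\in\cal{S}$ with $B_{1}V_{0}\ne 0$ is admissible) produces many such arcs; combined with the symmetries $x\leftrightarrow 1-x$, $y\leftrightarrow 1-y$, $x\leftrightarrow y$ and the density already available in the slices $x=\tfrac{1}{2}$ and $y=\tfrac{1}{2}$ (Theorem \ref{prop2} and its symmetric counterpart), this should force $X$ to be dense in all of $\R$.

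The principal obstacle lies in the fiber step. Unlike the single slice $c=\tfrac{1}{2}$ treated in Theorem \ref{prop2}, where an explicit auxiliary point and a careful limit argument carried the day, here one must control a family of surfaces depending on the additional parameter $c$. Guaranteeing, uniformly in $c$, both the existence of an elliptic fibration on $\cal{S}_{c}$ admitting a section of infinite order in $\cal{E}_{c,\xi}(\Q(c,\xi))$ and the presence of a bounded real component of $\cal{E}_{c,\xi}(\R)$ carrying a rational point is computationally heavy and is essentially the heart of the conjecture; moreover the double application of Silverman specialization, first in $c$ and then in $\xi$, requires careful bookkeeping of two successive exceptional sets, and it is precisely this uniformity requirement that prevents the current techniques from settling the conjecture outright.
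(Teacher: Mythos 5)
You should first note that the statement you are addressing is stated in the paper only as a conjecture: the authors say explicitly that their attempts to prove density of $\cal{F}$ in $\R^3$ were unsuccessful, and the most they establish is unirationality of the parameterizing variety (Theorem \ref{unirational}) together with density in the special slices $x=\tfrac12$ and $y=\tfrac12$ (Theorem \ref{prop2}). So there is no proof in the paper to compare against, and your proposal, as you yourself concede in the final paragraph, is a programme rather than a proof.

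The genuine gap is the fiber step, and it is worth being precise about why it is harder than ``an analogue of Theorem \ref{prop2}.'' The slice $x=\tfrac12$ is special: there $Q^2-P^2=1-2x=0$, so the four distance conditions collapse into two pairs and the problem reduces to the two-equation system (\ref{sys1}); the whole delicate apparatus of Theorem \ref{prop2} (the explicit point $(P_0,Q_0,z_0)$, the computation of the local minimum of $P_0$ at $(u_0,v_0)$, the limiting sequences $u_n, v_n$ introduced because the exceptional set in Silverman specialization could not be controlled uniformly) is built for that reduced system. For $c\neq\tfrac12$ the four conditions stay independent and the surface $\cal{S}_c$ you describe is an honestly harder object, for which no rational point --- let alone an elliptic fibration with a section of infinite order meeting a bounded real component --- is exhibited; indeed the paper notes that even geometric rationality of such surfaces gives no purchase on Euclidean density. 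There is also a structural flaw in how your two halves connect: to conclude that $X$ is dense you invoke the arcs from Theorem \ref{unirational} to show that many values of $c$ are attained by points of $\cal{F}$, but membership of $c$ in $X$ requires the entire fiber $\cal{F}_c$ to be dense in the plane $\{x=c\}$, which is far stronger than $\cal{F}_c\neq\emptyset$; so the second half of your argument silently presupposes the unproven first half. As it stands the proposal correctly identifies the obstruction (uniformity in $c$ of the Silverman--Hurwitz machinery) but does not overcome it, which is exactly where the authors also stopped.
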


\section{Points with rational distances from the vertices of a tetrahedron}\label{sec5}

Let $P_{0}, P_{1}, P_{2}, P_{3}$ be given points in $\Q^{3}$, not all lying on a plane.
Without loss of generality we may assume that $P_{0}=(0,0,0)$. Put
$P_{i}=(a_{i1},a_{i2},a_{i3})$ for $i=1,2,3$, and define $d_{ij}=|P_{i}P_{j}|$ for $0 \leq i<j \leq 3$,
i.e. $d_{ij}$ is the distance between the points $P_{i}, P_{j}$.
The constraint on the points $P_i$ implies that the points define the vertices
of a genuine tetrahedron with non-zero volume, so that the determinant of the matrix
$[a_{ij}]_{1\leq i,j\leq 3}$ is non-zero.
Let $P=(x,y,z)$ be a point in $\Q^{3}$
with rational distance to each of the points $P_{0}$, $P_{1}$, $P_{2}$, $P_{3}$. The corresponding system
of Diophantine equations is thus
\begin{equation*}
\begin{cases}
\begin{array}{lll}
  x^2+y^2+z^2 & = & Q_{0}^2 \\
  (x-a_{i1})^2+(y-a_{i2})^2+(z-a_{i3})^2 & = & Q_{i}^2,\quad \mbox{for}\; i=1,2,3,
\end{array}
\end{cases}
\end{equation*}
or equivalently, on replacing the second, third, and fourth equations by their differences with the first
equation,
\begin{equation}\label{generalsys}
\begin{cases}
\begin{array}{lll}
  x^2+y^2+z^2 & = & Q_{0}^2 \\
  a_{i1}x+a_{i2}y+a_{i3}z & = & \frac{1}{2}(Q_{0}^2-Q_{i}^2+d_{0i}^{2}),\quad \mbox{for}\; i=1,2,3.
\end{array}
\end{cases}
\end{equation}
Since the determinant of the matrix $A=[a_{ij}]_{1\leq i,j\leq 3}$ is non-zero, the (linear) system
consisting of the last three
equations from (\ref{generalsys}) can be solved with respect to $x,y,z$. The solution takes
the following form:
\begin{equation}
\label{xyz}
x=\frac{\op{det}A_{1}}{\op{det}A},\quad y=\frac{\op{det}A_{2}}{\op{det}A},\quad z=\frac{\op{det}A_{3}}{\op{det}A}
\end{equation}
where $A_{i}$, for $i=1,2,3$, is obtained from the matrix $A$ by replacing the $i$-th column by the column
comprising the right hand sides of the last three equations from (\ref{generalsys}). In particular,
$x,y,z$ are (inhomogeneous) quadratic forms in four variables $Q_{0}$, $Q_{1}$, $Q_{2}$, $Q_{3}$, with
coefficients in $\mathbb{K}:=\Q(\{a_{ij}:\;i,j\in\{1,2,3\}\})$.  Putting these computed values of
$x$,$y$,$z$ into the first equation, there results one inhomogeneous equation of degree four in four
variables. We homogenize this equation by introducing new variables $Q_{i}=R_{i}/R_{4}$ for $i=0,1,2,3$,
and work with the quartic threefold, say $\cal{X}$, defined by an equation of the form $\cal{F}({\bf R})=0$,
where for ease of notation we put ${\bf R}=(R_{0},R_{1},R_{2},R_{3},R_{4})$. Using Mathematica, the
set $\op{Sing}(\cal{X})$ of singular points of the variety $\cal{X}$ is computed to be
\begin{align*}
\op{Sing}(\cal{X})=\{&(0,\pm d_{01},\pm d_{02}, \pm d_{03},1), \; (\pm d_{01}, 0, \pm d_{12}, \pm d_{13},1),  \\
                     &(\pm d_{02},\pm d_{12}, 0, \pm d_{23},1), \; (\pm d_{03},\pm d_{12}, \pm d_{23}, 0, 1), \; (1,\pm 1, \pm 1, \pm 1, 0)\}.
\end{align*}
Thus for generic choice of $P_{1}$, $P_{2}$, $P_{3}$, the variety $\cal{X}$ contains 40 isolated singular
points.

We now prove that for generic choice of $P_{1}$, $P_{2}$, $P_{3}$, there is a solution depending on three
(homogenous) parameters of the equation defining the variety $\cal{X}$. We thus regard $a_{ij}$ as
independent variables and work with $\cal{X}$ as a quartic threefold defined over the rational function
field $\mathbb{K}$. In order to find a parameterization we will use the rational double point
$P=(1,1,1,1,0)$ lying on $\cal{X}$ and the idea used in the proof of Theorem \ref{unirational}. Put
\begin{equation*}
R_{0}=T+1,\quad R_{i}=(p_{i}+1)T+1,\quad\mbox{for}\;i=1,2,3,\mbox{ and}\quad R_{4}=p_{4}T,
\end{equation*}
where $p_{i}$ and $T$ are to be determined. On substituting these expressions into the equation $\cal{F}({\bf R})=0$,
there results $T^2(C_{2}+C_{3}T+C_{4}T^2)=0$, where $C_{i}$ is a homogenous form of degree $i$ in the four
variables $p_{1},\ldots,p_{4}$. Certainly under the assumption on the points $P_{i}, i=0,1,2,3$ (namely,
$\op{det}A\neq0$), the form $C_{2}$ is non-zero as an element of $\mathbb{K}[p_{1},p_{2},p_{3},p_{4}]$. Indeed, we have $C_{2}(0,0,0,p_{4})=-(\op{det}A)^2p_{4}^2$.
We also checked that for a generic choice of the points $P_{1}$, $P_{2}$, $P_{3}$, the polynomial $C_{2}$
is genuinely dependent upon the variables $p_{1},\ldots,p_{4}$, in that there are no linear forms
$L_{j}(p_1,...,p_4)$, $j=1,2,3$, such that $C_{2}(L_1,L_2,L_3)$ is a form in three or fewer variables.

Consider now the quadric $\cal{Y}:\;C_{2}(p_1,p_2,p_3,p_{4})=0$, regarded
as a quadric defined over $\mathbb{K}$. There are $\mathbb{K}$-rational points on $\cal{Y}$, namely
$Y_{j}=(a_{1j},a_{2j},a_{3j},1)$, $j=1,2,3$, and so in particular, $\cal{Y}$ can be rationally
parameterized with parametrization of the form $p_{i}=X_{i}(q_1,q_2,q_3)$, for homogeneous quadratic
forms $X_{i}$, $i=1,2,3,4$.  Thus, after the substitution $p_{i}\rightarrow X_{i}$ there results
an equation $T^3(C'_{3}+C'_{4}T)=0$, where $C'_{i}=C_{i}(X_1,X_2,X_3,X_4)$ and $C'_{i}\neq 0$ as an element
of $\mathbb{K}[q_{1}, q_{2}, q_{3}]$ for $i=3,4$.  This equation has a non-zero $\mathbb{K}$-rational root
$T=\phi(q_1,q_2,q_3)=-C'_{3}/C'_{4}$ and accordingly we get a rational parametric solution in three
(homogenous) parameters of the equation defining $\cal{X}$, in the form
\begin{equation*}
Q_{0}=\frac{1}{X_{4}({\bf q})}\Big(1+\frac{1}{\phi({\bf q})}\Big),\quad Q_{i}=\frac{1}{X_{4}({\bf q})}\Big(1+X_{i}({\bf q})+\frac{1}{\phi({\bf q})}\Big), \quad i=1,2,3,
\end{equation*}
where we put ${\bf q}=(q_1,q_2,q_3)$.
It is straightforward to check that the image of the map
$\Phi:\;\mathbb{P}(\mathbb{K})^2\ni (q_{1},q_{2},q_{3})\mapsto (Q_{0},Q_{1},Q_{2},Q_{3})\in\cal{X}(\mathbb{K})$
is not contained in a curve lying on the variety $\cal{X}$. Using now the expressions for $Q_{0}$,
$Q_{1}$, $Q_{2}$, $Q_{3}$, we can recover the corresponding expressions for $x,y,z$ given by (\ref{xyz}).

It is possible to write down from the Jacobian matrix of ${\bf R}(q_1,q_2,q_3)$ all the conditions
on $\{a_{ij}\}$,  $i,j\in\{1,2,3\}$, which guarantee that
the parameterization is genuinely dependent on three (homogenous) parameters. However, we refrain from
doing so, because the computation is massively memory intensive, and the resulting equations
complicated and unenlightening.  If we choose particular values of $a_{ij}$, then this independence
of $q_1,q_2,q_3$ is readily checked
(as happens, for example, when $P_1=(1,0,0)$, $P_2=(0,1,0)$, $P_3=(0,0,1)$).
In general, there results an explicit rational parameterization in three independent
parameters. There may, however, be some choices of vertices $P_i$ for which this approach (with the particular
rational double point chosen in the construction) results in the image of the map $\Phi$ being
a curve lying on $\cal{X}$. \\

To sum up, we have the following result.

\begin{thm}\label{unirationlity2}
Let $P_{0}=(0,0,0)$ and let $P_{i}=(a_{i1},a_{i2},a_{i3})$ be generic points in $\Q^3$ for $i=1,2,3$. Then the
variety parameterizing the points $P\in\Q^3$ with rational distances to $P_{i}$, $i=0,1,2,3$ is a quartic
threefold $\cal{X}$; and the set of rational parametric solutions of the equation defining $\cal{X}$ is
non-empty.
\end{thm}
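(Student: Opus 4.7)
The plan is to convert the problem of rational distances into the search for rational points on an explicit quartic threefold, and then to exploit a rational singular point on that threefold in order to produce a three-parameter family of solutions, following the method already used in the proof of Theorem \ref{unirational}.

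First I would write down the four sphere equations $(x-a_{i1})^2+(y-a_{i2})^2+(z-a_{i3})^2 = Q_i^2$ for $i=0,1,2,3$ (with $P_0$ at the origin) and replace the last three with their differences from the first, obtaining three equations linear in $x,y,z$. Since by hypothesis the points $P_i$ do not lie in a plane, $\op{det}A \neq 0$ for $A=[a_{ij}]$, and Cramer's rule expresses $x,y,z$ as inhomogeneous quadratic forms in $Q_0,\ldots,Q_3$ with coefficients in $\mathbb{K}:=\Q(\{a_{ij}\})$. Substituting into $x^2+y^2+z^2=Q_0^2$ yields a single quartic relation in the $Q_i$, and homogenizing via $Q_i=R_i/R_4$ gives the quartic threefold $\cal{X}:\cal{F}({\bf R})=0$.

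Next, I would locate a rational singular point of $\cal{X}$; the list of $\op{Sing}(\cal{X})$ produced by direct computation contains $P=(1,1,1,1,0)$, which works over $\mathbb{K}$. Following the trick from the proof of Theorem \ref{unirational}, parametrize a pencil through $P$ by
\begin{equation*}
R_0=T+1,\quad R_i=(p_i+1)T+1\ (i=1,2,3),\quad R_4=p_4 T,
\end{equation*}
and substitute into $\cal{F}=0$: because $P$ is (at least) a double point, the result factors as $T^2(C_2+C_3T+C_4T^2)$ where $C_i\in\mathbb{K}[p_1,\ldots,p_4]$ is homogeneous of degree $i$. The crucial observation is that $C_2(0,0,0,p_4)=-(\op{det}A)^2 p_4^2 \neq 0$, so $C_2$ is a nondegenerate quadric in four variables; and moreover it carries the obvious $\mathbb{K}$-rational points $Y_j=(a_{1j},a_{2j},a_{3j},1)$ for $j=1,2,3$. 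Using one such point I would write down an explicit rational parameterization $p_i=X_i(q_1,q_2,q_3)$ of the quadric $\cal{Y}:C_2=0$ by homogeneous quadratic forms in three new parameters $q_1,q_2,q_3$. After the substitution $p_i\mapsto X_i$ the factor $C_2$ vanishes, and the remaining equation $C_3'+C_4'T=0$ has the rational root $T=-C_3'/C_4'$, producing a three-parameter rational parametric solution of $\cal{F}=0$ and hence, via (\ref{xyz}), of the original problem.

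The hard part is verifying that for generic choice of the $P_i$ this construction does not collapse, i.e. that $C_3'$ and $C_4'$ are not identically zero, and that the image of the resulting map $\Phi:\mathbb{P}^2\dashrightarrow\cal{X}$ is two-dimensional (so that, combined with the free variable $T$, we genuinely parametrize a dense subset of the threefold). I would handle this by the open-condition principle: each of the conditions ``$C_3'\not\equiv 0$'', ``$C_4'\not\equiv 0$'', and ``$\Phi$ has maximal rank'' is Zariski open in the parameter space of the $\{a_{ij}\}$, so it suffices to exhibit one choice where all hold. Specializing to $P_1=(1,0,0),\ P_2=(0,1,0),\ P_3=(0,0,1)$ the calculation is small enough to verify directly in a computer algebra system, establishing nondegeneracy on a Zariski open subset and thereby completing the proof.
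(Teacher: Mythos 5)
Your proposal is correct and follows essentially the same route as the paper: the same reduction via Cramer's rule to the quartic threefold $\cal{X}$, the same pencil through the double point $(1,1,1,1,0)$ with $R_0=T+1$, $R_i=(p_i+1)T+1$, $R_4=p_4T$, the same use of $C_2(0,0,0,p_4)=-(\op{det}A)^2p_4^2$ and of the points $Y_j=(a_{1j},a_{2j},a_{3j},1)$ to parameterize the quadric $C_2=0$, and the same final root $T=-C_3'/C_4'$. Your closing appeal to the open-condition principle, verified on the specialization $P_1=(1,0,0)$, $P_2=(0,1,0)$, $P_3=(0,0,1)$, matches the paper's own handling of the genericity conditions.
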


We believe that much more is true.

\begin{conj}\label{uniconj}
Let $P_{0}=(0,0,0)$ and $P_{1}$, $P_{2}$, $P_{3}$ be generic rational points such that no three lie on a line
and the points do not all lie on a plane. Then the variety, say $\cal{X}$, parameterizing those
$P\in\Q^3$ with rational distances to $P_{i}$, $i=0,1,2,3$, is unirational over $\Q$.
\end{conj}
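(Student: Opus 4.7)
The plan is to project from a node. The quartic threefold $\cal{X}\subset\mathbb{P}^4$ has $40$ isolated ordinary double points, listed explicitly in the paragraph preceding Theorem~\ref{unirationlity2}. Fix any one, say $N=(1{:}1{:}1{:}1{:}0)$. A generic line through $N$ meets $\cal{X}$ with multiplicity $2$ at $N$ (because $N$ is an ordinary double point) and in $4-2=2$ further points, so the linear projection $\pi_N:\cal{X}\dashrightarrow\mathbb{P}^3$ onto the space of directions through $N$ is a dominant rational map of degree $2$. In the affine chart $(p_1,p_2,p_3,p_4)$ of this $\mathbb{P}^3$ appearing in the proof of Theorem~\ref{unirationlity2}, the two residual intersections are the roots of $C_2+C_3T+C_4T^2=0$, and so $\cal{X}$ is birational over $\Q$ to the double cover of $\mathbb{P}^3$ branched along the sextic surface $B_N:\ C_3^2-4C_2C_4=0$. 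Consequently $\Q$-unirationality of $\cal{X}$ reduces to $\Q$-unirationality of this double cover.

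The sextic $B_N$ should carry favourable structure: the remaining $39$ nodes of $\cal{X}$ project to singularities of $B_N$; the two-dimensional family of rational points on $\cal{X}$ produced by Theorem~\ref{unirationlity2} projects to a two-dimensional family of rational points on $B_N$; and the tangent cone of $\cal{X}$ at $N$---the quadric $\cal{Y}:\ C_2=0$ with the three explicit $\mathbb{K}$-rational points $(a_{1j},a_{2j},a_{3j},1)$---meets $B_N$ in a curve carrying $\mathbb{K}$-rational points. A standard route to unirationality of the double cover is then to exhibit on $B_N$ a two-parameter family of lines or conics (which pulls back to a three-parameter family of rational curves on the double cover and hence on $\cal{X}$), or alternatively to project $B_N$ a second time from one of its inherited rational nodes, reducing to the unirationality of a cubic surface---which is automatic in the presence of a rational point.

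The main obstacle is carrying out this analysis over the function field $\mathbb{K}=\Q(\{a_{ij}\})$: the polynomials $C_2,C_3,C_4$ computed from the defining polynomial $\cal{F}$ are enormous, the sextic $B_N$ even more so, and exhibiting the requisite family of rational curves on $B_N$ in full generality is likely intractable by hand. The standard workaround, paralleling the strategy of Theorem~\ref{unirationlity2}, is to verify the conclusion at a convenient specialization---say the orthogonal tetrahedron with $P_1=(1,0,0)$, $P_2=(0,1,0)$, $P_3=(0,0,1)$---by producing an explicit dominant rational map $\mathbb{P}^3\dashrightarrow\cal{X}$ in that case, and then to invoke the standard generic-fibre argument (dominance of a rational map being an open condition in the base of a flat family) to conclude that $\Q$-unirationality persists for all $(a_{ij})$ in a Zariski-open subset of the parameter space of tetrahedra, which is precisely the ``generic'' locus in the statement of the conjecture.
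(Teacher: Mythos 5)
You should first be aware that this statement is labelled a \emph{conjecture} in the paper: the authors prove only the weaker Theorem \ref{unirationlity2} (a two-dimensional family of rational points on the quartic threefold $\cal{X}$, i.e.\ a non-dominant map from $\mathbb{P}^2$), explicitly write ``We believe that much more is true,'' and leave unirationality open. So there is no proof in the paper to compare against, and your argument must stand alone. Its first half is sound but does not get far: for generic $a_{ij}$ the point $N=(1{:}1{:}1{:}1{:}0)$ is indeed an ordinary double point (the paper's check that $C_2$ cannot be rewritten in fewer than four variables says exactly that the tangent cone is a rank-four quadric), and projection from $N$ does exhibit $\cal{X}$ as birational to the double cover of $\mathbb{P}^3$ branched along the sextic $C_3^2-4C_2C_4=0$. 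But double covers of $\mathbb{P}^3$ branched along sextics are precisely a class of Fano threefolds for which unirationality is a well-known open problem (even over $\C$, for smooth branch locus), so the reduction by itself proves nothing; everything would have to come from the ``favourable structure'' of $B_N$, which you only assert with ``should'' and never establish. Your fallback of projecting again to reach a cubic surface is also unjustified: projecting the sextic surface $B_N$ from one of its nodes is a $4{:}1$ map to $\mathbb{P}^2$, not a passage to a cubic surface, and projecting the threefold from a second node does not obviously yield one either.

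The decisive gap is the closing ``workaround,'' which runs the specialization argument in the wrong direction. The openness statement you invoke applies when a rational map is defined over (an open subset of) the base of the family: a dominant map $\mathbb{P}^3\dashrightarrow\cal{X}$ defined over $\mathbb{K}=\Q(\{a_{ij}\})$ would specialize to a dominant map for all parameters outside a proper closed subset --- that is exactly the mechanism the paper uses for its weaker two-parameter family. But a dominant map constructed for \emph{one} special fibre, such as the orthogonal tetrahedron $P_1=(1,0,0)$, $P_2=(0,1,0)$, $P_3=(0,0,1)$, gives you no map over the generic point and hence nothing to spread out. Unirationality is not known to be an open, closed, or deformation-invariant condition in families, so there is no ``standard generic-fibre argument'' promoting unirationality of a single member to unirationality of the generic member. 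That step is not a technicality to be outsourced to a computation at a convenient specialization; it is the entire content of the conjecture, which your proposal therefore leaves open.
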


One can also state the following natural question.

\begin{ques}
Let $\cal{X}$ be defined as in Conjecture \ref{uniconj}. Is the set $\cal{X}(\Q)$ dense in the
Euclidean topology in the set $\cal{X}(\R)$?
\end{ques}

We expect that the answer is yes.

\begin{rem}
{\rm The construction above finds a double infinity of points in $\Q^3$ at rational distance from the
four vertices of the tetrahedron. If we suppose that the initial tetrahedron has rational edges, then
we thus deduce infinitely many sets of five points in $\Q^3$ where the ten mutual distances are
rational. We take as example the tetrahedron with vertices
\[ P_{1}=(0,0,0),\quad P_{2}=(1,0,0),\quad P_{3}=\left(\frac{11}{200},\frac{117}{800},0\right),\quad P_{4}=\left(\frac{7}{25}, \frac{63}{325}, \frac{21}{260}\right).\]
This is chosen as an example of a tetrahedron, discovered by Rathbun, where the edges, face areas,
and volume, are all rational. It corresponds to the first example in the list in Section D22 of
Guy~\cite{Guy}. The explicit parametrization as computed above takes several computer screens to
display, so we do not present it. However, on computing specializations, the point with smallest
coordinates (minimizing the least common multiple of the denominators of $x,y,z$) that we could find is
\[ \left( \frac{617}{4900}, \; \frac{2553}{63700}, \; \frac{3}{25480} \right), \]
which in fact lies within the tetrahedron.
}
\end{rem}

\begin{rem} {\rm
The fact in the above proof that the matrix $A=[a_{ij}]_{1\leq i,j\leq 3}$ is non-singular
follows from the assumption that the points $P_0,P_1,P_2,P_3$ define a genuine tetrahedron.
A question arises as to what can be said in the situation when $\op{det}A=0$?
We need to consider two cases: where the rank $\op{rk}(A)$ is 2 or 1,
corresponding respectively
to the four points being coplanar, and the four points being collinear.
Consider first the case of $\op{rk}(A)=2$. Note that we encounter this situation in section \ref{sec4}.
The vectors $P_{1}, P_{2}, P_{3}$ are linearly dependent, and without loss
of generality we can assume that $P_{1}, P_{2}$ are linearly independent, so that
$P_{3}=pP_{1}+QP_{2}$ for some $p, q\in\Q$. It follows that the linear forms in $x, y, z$ from
the system (\ref{generalsys}) are linearly dependent.
Let $A_{ij}$ be the $2\times 2$ matrix obtained from $A$ by deleting the $i$-th row and the $j$-th column.
Then at least one of $A_{31}$, $A_{32}$, $A_{33}$ has non-zero determinant. Without loss of generality,
suppose $\op{det}A_{31} \neq 0$.
Solving the first two equations at (\ref{generalsys}) with respect to $y, z$:
\begin{align*}
y=&-\frac{\op{det}A_{32}}{\op{det}A_{31}}x-\frac{1}{2\op{det}A_{31}}(a_{23} d_{01}^2-a_{13} d_{02}^2+\left(a_{23}-a_{13}\right) Q_0^2-a_{23} Q_1^2+a_{13} Q_2^2),\\
z=&-\frac{\op{det}A_{33}}{\op{det}A_{31}}x-\frac{1}{2\op{det}A_{31}}(a_{22} d_{01}^2-a_{12} d_{02}^2+\left(a_{22}-a_{12}\right) Q_0^2-a_{22} Q_1^2+a_{12} Q_2^2.
\end{align*}
Moreover, $Q_{0}, Q_{1}, Q_{2}, Q_{3}$ need to satisfy the equation
\begin{equation}\label{quadric1}
\cal{Q}:\;(p+q-1)Q_{0}^2-pQ_{1}^2-qQ_{2}^2+Q_{3}^2=d_{03}^2-pd_{01}^2-qd_{02}^2.
\end{equation}
The quadric $\cal{Q}$ may be viewed as a quadric defined over the function field $\mathbb{K}:=\Q(\{a_{ij}:\;i=1,2,j=1,2,3\})$.
The quadric $\cal{Q}$ contains the point at infinity $(Q_{0}:Q_{1}:Q_{2}:Q_{3}:T)=(1:1:1:1:0)$
and thus $\cal{Q}$ can be parameterized by rational functions, say $Q_{i}=f_{i}({\bf R}) \in \mathbb{K}({\bf R})$,
where ${\bf R}=(R_{0},R_{1},R_{2})$ are (non-homogenous) coordinates.


Moreover, the numerator
of $f_{i}$ is of degree $\leq 2$ for $i=0,1,2$; and the same is true for the common denominator
of $f_{i}, i=0,1,2$. Using this parametrization we compute the expressions for $y, z$. Next,
substitute the computed values of $y, z$ and $Q_{0}$ into the equation $x^2+y^2+z^2=Q_{0}^2$.
This equation is a quadratic equation in $x$ of the form
\begin{equation*}
C_{2}x^2+C_{1}x+C_{0}=0,
\end{equation*}
where $C_{i}\in\mathbb{K}({\bf R})$ for $i=0,1,2$. We arrive at the problem of finding rational points
on the threefold
\begin{equation*}
\cal{X}:\;V^2=C_{1}^2-4C_{0}C_{2}=:F({\bf R})
\end{equation*}
defined over the field $\mathbb{K}$. The polynomial $F$ is of degree 6. However, one can check that
with respect to each $R_{i}, i=0,1,2$, the degree of $F$ is 4, and thus $\cal{X}$ can be viewed as
a hyperelliptic quartic (of genus $\leq 1$) defined over the field $\mathbb{K}({\bf R}')$, where ${\bf R}'$ is
a vector comprising exactly two variables from $R_{0},R_{1}, R_{2}$. We thus expect that
for most rational points $P_{1}, P_{2}, P_{3}$ with $P_{3}=pP_{1}+qP_{2}$, there is a specialization
of $R_{0}, R_{1}$ (say), to rational numbers such that $\cal{X}_{R_{0}, R_{1}}$
represents a curve of genus one with infinitely many rational points. Tracing back the reasoning
in this case we will get infinitely many rational points with rational distances to the points
$P_{0}, P_{1}, P_{2}, P_{3}$.

\bigskip

What can be done in the case when $\op{rk}(A)=1$ (which corresponds to the points $P_0,P_1,P_2,P_3$
being collinear)? In order to simplify the notation, put $P_{1}=(a,b,c)$ and $d_{01}=d$. Without loss
of generality we can assume that $P_{2}=pP_{1}, P_{3}=qP_{1}$ for some $p, q\in\Q\setminus\{0\}$.
Then the system  (\ref{generalsys}) comprises just one linear form in $x, y, z$ which needs to be
represented by three non-homogenous quadratic forms. More precisely,
\begin{equation}\label{sys3}
ax+by+cz=\frac{1}{2}(Q_{0}^2-Q_{1}^2+d^2)=\frac{1}{2p}(Q_{0}^2-Q_{2}^2+p^2d^2)=\frac{1}{2q}(Q_{0}^2-Q_{3}^2+q^2d^2).
\end{equation}
Let $\cal{V}$ be the variety defined by the last two equations. After homogenization by
$Q_{i}\mapsto Q_{i}/T$ and simple manipulation, we get
\begin{equation*}
 \cal{V}:\; \
\begin{cases}
\begin{array}{lll}
Q_{2}^2=(1-p)Q_{0}^2+pQ_{1}^2+p(p-1)d^2T^2, \\
Q_{3}^2=(1-q)Q_{0}^2+qQ_{1}^2+q(q-1)d^2T^2.
\end{array}
\end{cases}
\end{equation*}
The point $(Q_{0}:Q_{1}:Q_{2}:Q_{3}:T)=(1:1:1:1:0)$ lies on $\cal{V}$ and can be used to find
parametric solutions of the system defining $\cal{V}$. However, observe that any point which lies
on $\cal{V}$ with $T\neq 0$ allows us to compute the value of $z$ from equation (\ref{sys3}).
This expression for $z$ depends on $x, y$, and substituting into the first equation
at (\ref{generalsys}), namely $x^2+y^2+z^2=Q_{0}^2$, we are left with one equation of the form
$$
\cal{W}:\;C_{0}x^2+C_{1}xy+C_{2}y^2+C_{3}x+C_{4}y+C_{5}=0,
$$
where $C_{i}$ depends on $p,q,a,b,c$ and the solution of the system defining the variety $\cal{V}$.
In general, $\cal{W}$ is a conic and thus has genus 0. Thus, provided that we can find a rational
point on $\cal{W}$, we can find infinitely many rational points (in fact a parameterized curve)
with rational distances to the four collinear points $P_{0}, P_{1}, P_{2}, P_{3}$.
As example here, assume that $d=\sqrt{a^2+b^2+c^2}$ is a rational number.
Then the variety $\cal{V}$ contains the rational line
\begin{equation*}
(Q_{0}:Q_{1}:Q_{2}:Q_{3}:T)=(u-d/2:u+d/2:u - (1/2 - p)d:u - (1/2 - q)d:1).
\end{equation*}
In this case (\ref{sys3}) reduces to the one equation $ax+by+cz=d(d-2u)/2$. Solving for
$z$, and performing the necessary computations, the equation for the quadric $\cal{W}$ takes
the following form:
\begin{equation*}
V^{2}=b^2 + c^2 - d^2+4adX-4(a^2 + b^2 + c^2)X^2=-a^2+4adX-4d^2X^2=-(a-2dX)^2,
\end{equation*}
where $V=(2(b^2+c^2)y-b(d^2-2du-2ax))/(c(d-2u))$ and $X=x/(d-2u)$, the last identity following
from the equality $a^2+b^2+c^2=d^2$. From the assumption on rationality of $d$, we can find
$x, y, z$ in the following form:
\begin{equation*}
x=\frac{a(d-2u)}{2d},\quad y=\frac{b(d-2u)}{2d},\quad z=\frac{c(d-2u)}{2d},
\end{equation*}
with
\begin{equation*}
Q_0=\frac{d-2u}{2}, \quad Q_1=\frac{d+2u}{2}, \quad Q_2=d p-\frac{d-2u}{2}, \quad Q_3=d q-\frac{d-2u}{2},
\end{equation*}
giving rational solutions of the original system.

\bigskip


}
\end{rem}

\begin{rem}
\rm{
Guy op. cit. gives one parameterized family of tetrahedra which have rational edges, face areas, and volume.
He also lists nine examples due to John Leech of such tetrahedra comprised of four congruent
acute-angled Heron triangles appropriately fitted together. The six edges of the tetrahedron
thus fall into three equal pairs. We discover that it is straightforward to write down an
infinite family of such tetrahedra as follows.

If the Heron triangle has sides $p,q,r$, then the area and volume conditions for the tetrahedron become
\begin{align*}
(p+q+r)(-p+q+r)(p-q+r)(p+q-r)= & \square, \\
2(-p^2+q^2+r^2)(p^2-q^2+r^2)(p^2+q^2-r^2)= & \square.
\end{align*}
Using the Brahmagupta parameterization of Heron triangles, we set
\[ (p,q,r)=((v+w)(u^2-v w), \; v(u^2+w^2), \; w(u^2+v^2) ), \]
reducing the two conditions above to the single demand
\[ -(u^2-v^2)(u^2-w^2)(u^2-u(v+w)-v w)(u^2+u(v+w)-v w) = \square. \]
Setting $W=w/u$, this is equivalent to
\[ -(1-W^2) \left(  \frac{u+v}{u-v} -W \right) \left( \frac{u-v}{u+v} + W \right) = \square. \]
This elliptic quartic has cubic form
\[  Y^2 = X(X + v^2(u^2-v^2))(X - u^2(u^2-v^2)). \]
Demanding a point with $X=2u v^2(u+v)$ gives
\[ 2(3u-v)(-u+2v) = \square, \]
parameterized by
\[ (u,v,w)=( m^2+4, 3 m^2+2), \mbox{ with } w=\frac{(2m^2+3)(m^2+4)}{4m^2+1}. \]
This in turn leads to the tetrahedron with vertices
\begin{align*}
P_1= & (0, \; 0,\; 0), \\
P_2= & (10(m^4-1)(m^4+3m^2+1), \; 0, \; 0), \\
P_3= & \big(\frac{2(m^2-1)(m^2+4)(3m^2+2)^2}{5}, \frac{(m^2+4)(2m^2+3)(3m^2+2)(4m^2+1)}{5}, \; 0\big), \\
P_4= & \big( \frac{2(m^2-1)(2m^2+3)^2(4m^2+1)}{5}, \\
& -\frac{(2m^2+3)(2m^2-5m-2)(2m^2+5m-2)(3m^2+2)}{5}, \\
& \; 4(m^2-1)m(2m^2+3)(3m^2+2) \big);
\end{align*}
edge lengths $(p,q,r)$ given by
\begin{align*}
p & = 10(m^4-1)(m^4+3m^2+1), \\
q & = (m^2+4)(3m^2+2)(2m^4+2m^2+1), \\
r & = (2m^2+3)(4m^2+1)(m^4+2m^2+2);
\end{align*}
face areas given by
\[ (m^4-1)(m^2+4)(4m^2+1)(2m^2+3)(3m^2+2)(1+3m^2+m^4); \]
and volume equal to
\[\frac{1}{62208} m(m^2-1)(m^4-1)(m^2+4)(4m^2+1)(2m^2+3)^2(3m^2+2)^2(1+3m^2+m^4). \]
}
\end{rem}

\section{The unit cube}\label{sec6}

Finding an infinity of points in $\Q^3$, if indeed such exist, that lie at rational distance from the {\it eight} vertices $(i,j,k)$, $i,j,k=0,1$,
of the unit cube seems to be an intractable problem. If we restrict attention to the plane $x=y$,
we are aware of the following two points (equivalent under the symmetry $x \leftrightarrow 1-x$) where distances to the vertices
of the unit square are rational, and distances to the two cube vertices $(1,0,1), (0,1,1)$ are rational:
\begin{equation}
\label{pts6}
(x,y,z)=\left( \frac{31}{108}, \frac{31}{108}, \frac{1519}{1080} \right), \qquad \left( \frac{77}{108}, \frac{77}{108}, \frac{1519}{1080} \right).
\end{equation}
The defining system of equations for this situation is
\begin{equation*}
\begin{cases}
 \begin{array}{lll}
2 x^2 + z^2         & = & P^2, \\
2 x^2-2x+1+z^2      & = & Q^2, \\
2 (x-1)^2 +z^2      & = & S^2, \\
(1-x)^2+x^2+(1-z)^2 & = & T^2.
\end{array}
\end{cases}
\end{equation*}
Then $1+Q^2-2z=T^2$, so we obtain
\[  z^2 = 1/2 (1-8 m^2/t^2)(-1+2(1-m^2)^2/t^2), \qquad 1+(1+m^2)^2/t^2 - 2 z = T^2. \]
Equivalently,
\[ 2(t^2-8 m^2)(-t^2+2(1-m^2)^2) = (t^2 + (1+m^2)^2 - U^2)^2, \]
where $U=Tt$, $Z=t^2 z$.
A search over this surface up to a height of $5000$ resulted in discovering only the point
$(m,t,U)=(-24,360,313)$ and symmetries, leading to the points at (\ref{pts6}). \\ \\
A point on the plane $x=1/2$ at rational distance from the vertices of the cube results in
four pairs of equal distance, with defining equations
\begin{equation}\label{symsys}
\begin{cases}
 \begin{array}{lll}
1/4+y^2+z^2         & = & P^2, \\
1/4+(1-y)^2+z^2     & = & Q^2, \\
1/4+y^2+(1-z)^2     & = & R^2, \\
1/4+(1-y)^2+(1-z)^2 & = & S^2,
\end{array}
\end{cases}
\end{equation}
and we found no solution.
However, if we ask only for {\it three} pairs of distances to the
cube vertices be rational, rather than four, e.g. consider the system of equations defined by the
first three equations from (\ref{symsys}), then we are able to prove the following result.
\begin{thm}
Let $\cal{A}$ be the set of rational curves lying on the plane $x=1/2$ with the property that each rational
point on $A\in\cal{A}$ has rational distances to six vertices of the unit cube. Then $\cal{A}$ is infinite.
\end{thm}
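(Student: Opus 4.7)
My plan is to build directly on Proposition \ref{prop2}. Eliminating $y,z$ via $y = (P^2-Q^2+1)/2$ and $z = (P^2-R^2+1)/2$ and substituting into the first equation of (\ref{symsys}), the three-equation system collapses to the single quartic surface
$$
\cal{X}:\; (P^2-Q^2+1)^2+(P^2-R^2+1)^2+1 = 4P^2
$$
in $(P,Q,R)$-space; a rational parametric curve on $\cal{X}$ pulls back to a rational curve in the plane $x=1/2$ with the required property. The first two equations alone define the del Pezzo surface $\cal{S}:\; 4z^2=H(P,Q)$ of Proposition \ref{prop2}, on which $(P,Q,z) = (P_0(u,v), Q_0(u,v), z_0(u,v))$ is already an explicit rational parametrization. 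Imposing the remaining condition $R^2 = P_0(u,v)^2 + 1 - 2 z_0(u,v)$ and clearing the common denominator $D(u,v) = 4(u^4-1)v$ by setting $W = DR$ produces
$$
\cal{C}_u:\; W^2 = f_u(v),
$$
where $f_u(v)\in\Q[u][v]$ is a quartic in $v$ whose leading coefficient $4(u^2+1)^2$ and constant term $((u^2+1)(u^4+1))^2$ are both squares in $\Q[u]$.

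Regarded as a family over $\Q[u]$, $\cal{C}_u$ is a genus-one curve with the visible rational points $(v,W) = (0,\pm(u^2+1)(u^4+1))$ and the two points at infinity (where $W\sim \pm 2(u^2+1)v^2$), hence birational over $\Q(u)$ to an elliptic curve $\cal{E}_u$; equivalently, it is the generic fibre of an elliptic surface $\cal{T}\to\mathbb{A}^1_u$. Following the pattern of Proposition \ref{prop3}, it suffices to exhibit a section $\sigma\in\cal{E}_u(\Q(u))$ of infinite order: the integer multiples $m\sigma$ ($m\in\N$) are then distinct $\Q(u)$-rational points of $\cal{E}_u$, and pulling them back via $\cal{E}_u\to\cal{C}_u$ and the Proposition \ref{prop2} formulas yields rational parametrizations $u\mapsto (1/2, y_m(u), z_m(u))$ of curves in the plane $x=1/2$ on which every rational point enjoys rational distances $P,Q,R$ to the six specified cube vertices. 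Non-torsion of $\sigma$ implies distinctness of these parametrizations as $m$ varies, producing the required infinite family.

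The principal obstacle is the explicit construction of $\sigma$. The visible rational points on $\cal{C}_u$ most likely correspond to $2$-torsion on $\cal{E}_u$ (as typically happens when the defining quartic has square leading and constant terms), so a deeper search is required. I would try either a low-degree ansatz $v = \lambda(u) \in\Q(u)$ making $f_u(\lambda(u))$ a square in $\Q[u]$, or a numerical hunt for a rational solution of the three-equation system in the plane $x=1/2$ (such a solution is expected to exist, since the extra condition imposes only one square constraint on the dense family of points provided by Proposition \ref{prop2}); the rational point so obtained on some $\cal{C}_{u_0}$ should then be deformable to a $\Q(u)$-section, whose infinite order is verified by checking that its specialization at a convenient $u_0\in\Q$ is non-torsion, for example by a $2$-descent on $\cal{E}_{u_0}$ in Magma. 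The remaining steps (computing the multiples $m\sigma$ and transporting them back through the chain of birational maps) are routine, if cumbersome.
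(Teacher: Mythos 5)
Your reduction is set up correctly: on the plane $x=1/2$ the six distances fall into three equal pairs, the first two equations of (\ref{symsys}) are exactly the surface $\cal{S}$ of Theorem \ref{prop2}, and the third imposes $R^2=P^2+1-2z$, which after substituting the explicit point $(P_0,Q_0,z_0)$ and clearing the denominator $4(u^4-1)v$ does yield a quartic $W^2=f_u(v)$ in $v$ with square leading coefficient $4(u^2+1)^2$ and square constant term $((u^2+1)(u^4+1))^2$, hence an elliptic fibration over the $u$-line. The overall template --- produce a non-torsion section, take its multiples, pull back to curves in the plane --- is the same one the paper uses, though you fiber a different model of the same threefold: the paper works with the projective quartic surface $\cal{V}:\;(P^2-R^2)^2+(P^2-Q^2)^2-2(Q^2+R^2)T^2+3T^4=0$, slices it by the pencil of planes $T=a(P-R)$ through the singular points $[1:\pm1:1]$, and then performs the base change $a=(t^2+1)/2t$ to acquire a rational point on the generic fibre.

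There is, however, a genuine gap: you never exhibit the section $\sigma$ of infinite order, and that is the entire content of the theorem. You note yourself that the visible points at $v=0$ and at infinity are likely torsion, and then substitute a search plan (``I would try\dots'') for the construction. There is no a priori guarantee that $\cal{E}_u(\Q(u))$ has positive rank; in the paper's version of this step a rational point does not even exist on the generic fibre until after a quadratic base change in the base parameter, and it is entirely possible that your fibration likewise needs a base change $u=\psi(t)$ before any non-torsion section appears. Until such a section (or base change plus section) is written down and certified non-torsion, e.g.\ by specializing $u$ and computing on the specialized curve, the argument does not close. A secondary point to address once a section is found: distinctness of the points $m\sigma$ in $\cal{E}_u(\Q(u))$ must still be translated into non-constancy and pairwise distinctness of the resulting curves $u\mapsto(1/2,y_m(u),z_m(u))$, since the passage from the fibration back to the $(y,z)$-plane need not be injective; this is routine (the fibres of that map are finite) but should be stated.
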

\begin{proof}
We consider only the system of equations defined by the first three equations from the system
(\ref{symsys}) above (other cases are treated in the same manner).
The six distances now fall into three equal pairs, requiring
\[ y=(P^2-Q^2+1)/2, \quad z=(P^2-R^2+1)/2, \]
together with the equation which can be written in homogenous coordinates in the following form:
\begin{equation*}
\label{PQRT}
\cal{V}:\;(P^2 - R^2)^2 + (P^2 - Q^2)^2 -2 (Q^2+R^2) T^2 +3 T^4 = 0.
\end{equation*}

We prove that the set of rational curves lying on $\cal{V}$ is infinite. Consider the intersection of
$\cal{V}$ with the family of planes $L_{a}:\;T=a(P-R)$. Remarkably, the intersection $\cal{V}\cap L_{a}$
defines a singular curve, say $\cal{C}$, in the projective plane $\mathbb{P}^{2}(\Q(a))$, with singular
points $[P:Q:R]=[1:\pm 1:1]$. In fact, the curve $\cal{C}$ is of genus 1.
By homogeneity we can assume that $R=1$. Making a change of variables
$$
(P,Q)=(p+1,\;pq+1)\quad\mbox{with inverse}\quad (p,q)=\Big(P-1,\;\frac{Q-1}{P-1}\Big)
$$
the (inhomogenous) equation of $\cal{C}$ takes the form $p^2H(p,q)=0$, where
\begin{equation*}
 H(p,q)=(2+3a^4-2(a^2+1)q^2+q^4)p^2+4(2-(1+a^2)q-q^2+q^3)p+4(q^2-2q-a^2+2)).
\end{equation*}
In other words, the curve $\cal{C}$ is the set-theoretic sum of the (double) line $p=0$ and the curve
of degree 6, given by the equation $\cal{C}':\;H(p,q)=0$. The equation for $\cal{C}'$ can be rewritten
as
\begin{equation*}
 \cal{C}':\;W^2=(a^2-1)q^4-2(a^2-1)q^3-(2a^2-1)^2q^2+2a^2(3a^2-2)q+a^2(3a^4-6a^2+2),
\end{equation*}
where we put $W=\frac{1}{2}(q^4-2(a^2+1)q^2+3a^4+2)p+q^3-q^2-(a^2+1)q+2$.
In order to guarantee the existence of rational points on $\cal{C}'$ (and hence on $\cal{C}$)
we consider a quadratic base change $a=(t^2+1)/2t$.  Then $a^2-1=((t^2-1)/2t)^2$ and thus the
curve $\cal{C}'$ contains a $\Q(t)$-rational point at infinity.
The birational model $\cal{E}'$ of the curve $\cal{C}'$ is given by the equation in short
Weierstrass form
$\cal{E}':\;Y^2=X^3+AX+B$,
where
\begin{align*}
A&=-108(13t^{16}-20t^{12}+78t^8-20t^4+13),\\
B&=864(23t^{24}-132t^{20}+129t^{16}-296t^{12}+129t^8-132t^4+23).
\end{align*}
The curve $\cal{E}'$ contains the point of infinite order
\begin{equation*}
Z=(12(2t^8+3t^6-2t^4+3t^2+2), \; 108t(t^2 + 1)(t^8 - 1)).
\end{equation*}
The point $2Z$ leads to a non-trivial curve lying on $\cal{V}$ (the equations for this curve
are too unwieldy to present explicitly here), and correspond to the following $y,z$ satisfying the first
three equations of our system:
\begin{align*}
y=&(t^{48}-8 t^{47}+20 t^{46}+8 t^{45}-24 t^{44}-1528 t^{43}+6684 t^{42}-4872 t^{41}-69302 t^{40}\\
  &+96040 t^{39}+771532 t^{38}-2467368 t^{37}-4047800 t^{36}+22047704 t^{35}+12635044 t^{34} \\
& -107433944 t^{33} -23948593 t^{32} +342788016 t^{31}+24622088 t^{30}-780080048 t^{29} \\
& -638000 t^{28}+1324015696 t^{27} -37969832 t^{26} -1716035152 t^{25}+57538508 t^{24} \\
& +1716035152 t^{23}-37969832 t^{22} -1324015696 t^{21} -638000 t^{20} +780080048 t^{19} \\
& +24622088 t^{18}-342788016 t^{17}-23948593 t^{16}+107433944 t^{15} +12635044 t^{14} \\
& -22047704 t^{13}-4047800 t^{12}+2467368 t^{11}+771532 t^{10}-96040 t^9-69302 t^8 \\
& +4872 t^7+6684 t^6 +1528 t^5-24 t^4-8 t^3+20 t^2+8 t+1)/(2 t \Delta), \\
z&=(3 t^{48}-16 t^{47}+56 t^{46}-32 t^{45}+1096 t^{44}-5696 t^{43}+ 15928 t^{42}+11472 t^{41} \\
& +51710 t^{40}-551056 t^{39}+1282392 t^{38}+3181248 t^{37}-11188440 t^{36}-701152 t^{35} \\
& +39387992 t^{34}-55013168 t^{33} -75669523 t^{32}+272885472 t^{31}+75471984 t^{30} \\
& -744371648 t^{29}+210064 t^{28}+1377115648 t^{27} -116092816 t^{26} -1850031968 t^{25} \\
& +173321252 t^{24}+1850031968 t^{23}-116092816 t^{22} -1377115648 t^{21} +210064 t^{20} \\
& +744371648 t^{19}+75471984 t^{18}-272885472 t^{17}-75669523 t^{16}+55013168 t^{15} \\
& +39387992 t^{14} +701152 t^{13} -11188440 t^{12}-3181248 t^{11}+1282392 t^{10} \\
& +551056 t^9 +51710 t^8 -11472 t^7 +15928 t^6 +5696 t^5+1096 t^4 +32 t^3 +56 t^2 \\
& +16 t+3)/((t^2-1)\Delta),
\end{align*}
where
\begin{align*}
\Delta & =4(t-1)(t+1)(t^2+1)^2(t^8-4 t^7+10 t^6+12 t^5-14 t^4-12 t^3+10 t^2+4 t+1) \; \times \\
& (t^{16}-4 t^{14}+168 t^{12} -492t^{10}+718 t^8-492 t^6+168 t^4-4 t^2+1) \times (t^{16}+4 t^{14} \\
& -32 t^{13} +232 t^{12}+160 t^{11}-756 t^{10}-320 t^9 +1102 t^8+320 t^7-756 t^6-160 t^5 \\
& +232 t^4+32 t^3+4 t^2+1).
\end{align*}
Computing the points $mZ$ for $m=3,4,\ldots$ and the corresponding points on $\cal{C}$, we get infinitely
many rational curves lying on $\cal{V}$; and the result follows.
\end{proof}

\bigskip


\bigskip

We know (up to symmetry) precisely two points $(x,y,z) \in \Q^3$, with $x \neq \frac{1}{2}$, $x \neq y$,
where the distances to the vertices $(0,0,0)$, $(0,1,0)$, $(1,0,0)$, $(1,1,0)$ of the unit square
are rational, and where there is also rational distance to a fifth vertex $(0,0,1)$ of the unit cube:

\bigskip

\begin{center}
\begin{tabular}{rrr|rrrrr}
$x$ & $y$ & $z$ & $d_1$ & $d_2$ & $d_3$ & $d_4$ & $d_5$ \\ \hline
77/108 & 41/27 & -28/27  &           71/36 & 67/36 & 49/36 & 43/36 & 95/36 \\
83/125 & -49/500 & -14/75 &    389/300 & 349/300 & 209/300 & 119/300 & 409/300 \\ \hline
\end{tabular}
\begin{center}
Table 3: points in $\Q^3$ with five rational distances to vertices of the unit cube
\end{center}
\end{center}

\bigskip

\noindent School of Mathematical and Statistical Sciences, Arizona State University, Tempe AZ 85287-1804,
USA; email:\; {\tt bremner@asu.edu}

\bigskip

\noindent Jagiellonian University, Faculty of Mathematics and Computer Science, Institute of Mathematics, {\L}ojasiewicza 6, 30 - 348 Krak\'{o}w, Poland;  email: {\tt maciej.ulas@uj.edu.pl}
\end{document}